\newcommand{\R}{{\mathbb R}}
\newcommand{\N}{{\mathbb N}}
\newcommand{\Z}{{\mathbb Z}}
\renewcommand{\(}{\left(}
\renewcommand{\)}{\right)}
\newtheorem{thm}{Theorem}
\newtheorem{lemma}[thm]{Lemma}
\newtheorem{prop}[thm]{Proposition}
\newtheorem{cor}[thm]{Corollary}
\newtheorem{remark}[thm]{Remark}
\renewcommand\phi{\varphi}
\renewcommand{\tilde}{\widetilde}
\newcommand{\finprf}{\null\hfill$\square$\vskip 0.3cm}
\newcommand{\bq}{\begin{equation}}
\newcommand{\eq}{\end{equation}}
\renewcommand{\(}{\left(}
\renewcommand{\)}{\right)}
\newcommand{\be}[1]{\begin{equation}\label{#1}}
\newcommand{\ee}{\end{equation}}
\renewcommand{\(}{\left(}
\renewcommand{\)}{\right)}
\title[Gagliardo-Nirenberg-Sobolev inequalities on planar graphs]{Gagliardo-Nirenberg-Sobolev inequalities on  planar graphs}
\author[M.J. Esteban]{Maria J. Esteban}
\address{CEREMADE, CNRS, Université Paris-Dauphine, PSL Research University, Place de Lattre de Tassigny, 75016 Paris, France} 
\email{esteban@ceremade.dauphine.fr}
\date{\today}
\DeclareRobustCommand{\SkipTocEntry}[5]{}
\begin{document}

\begin{abstract}
In this paper we study a family of interpolation Gagliardo-Nirenberg-Sololev inequalities on planar graphs. We are interested in knowing when the best constants in the inequalities are achieved. We also analyse the set of solutions of the corresponding Euler-Lagrange equations. 
\end{abstract}

\maketitle


In the past years many works have been devoted to the study of functional inequalities, their best constants, the existence of extremal functions and their qualitative properties. There are examples for compact and not compact manifolds, in the flat Eucledian space in various dimensions, on the line, etc. Among these inequalities, the so-called Gagliardo-Nirenberg-Sobolev inequalities: if $D$ is an open domain of $\R^d$, a very simple class of inequalities has the form 
\be{init-inf}
\(\int_D  |v|^p\)^{2/p} \le C \int_D |\nabla v|^2 + \lambda |v|^2 \;, \;\mbox{ for all }\; v\in H^1_0(D)\,,
\ee
with $\lambda\in \R_+$, $p\in  (2, 2^*]$,  $2^*=2d/(d-2)$ if $d\ge 3$, $2^*= +\infty$ if $d=1,2$.

These inequalities have been studied throughly and much is known about their best contacts, or their extremal functions, for a large family of domains $D$. In the case $D= S^d$,  $d\ge 1$, $p>2$, these inequalities, in their optimal form, are usually attributed to W.~Beckner~\cite{MR1230930} but can also be found in~\cite[Corollary~6.1]{BV-V}. However an earlier version corresponding to the range $p\in[1,2)\cap(2,2^\#)$ was established in the context of continuous Markov processes and linear diffusion operators by D.~Bakry and M.~Emery in~\cite{Bakry-Emery85,MR808640}, using the \emph{carr\'e du champ} method, where~$2^\#$ is the \emph{Bakry-Emery exponent} defined as
$
2^\#=\frac{2\,d^2+1}{(d-1)^2}
$, 
for any $d\ge2$. See also \cite{Do-Es-La-2014}. For the case of general compact manifolds, see \cite{MR1338283, MR1631581, MR1412446, MR1435336, DolEstLoss-compactmanifolds}. In the case of the line $\R$, see \cite{Dol-Est-Lap-Loss-2014}. For the case of the whole domain $\R^d$, there are many references. See for instance Appendix B in \cite{Do-Es-La-2014} and references therein for details about the best constants and optimizers.

Variations of these inequalities corresponding to the inclusion of external magnetic fields, and thus replacing the gradient operator by the magnetic gradient operator, also have been dealt with, with less success for the characterization of the best constants or the extremals. To our knowledge, very little has been done in this direction concerning graphs. Some results about inequalities on graphs can be found in~\cite{MR3988826}. The goal of this paper is to analyze the above inequalities when $D$ is a  metric planar graph. The same results can be extended to $d$-dimensional graphs, with $d\ge 3$,  almost straightforwardly, since the nature of the graphs is $1$-dimensional.

In this paper we will deal only with locally finite graphs, that is, graphs for which there is no accumulation of edges  at any vertex of the graph.

Assume that $G$ is a graph consisting of  vertices $\{V_i\}$ and  edges $\{e_j\}$ connecting the vertices, their endpoints. We will denote $E$ the set of edges of the graph $G$. Of course, each bounded edge $e$ can be identified as a pair of vertices $\{V_\ell, V_k\}$, and an unbounded edge will be defined by one endpoint and a direction (a vector).  In $\R^d$ endowed with the usual Lebesgue measure,  such a graph is a {\sl metric} graph, because to each edge we can assign a given length $\ell$, which is the distance between its endpoints, and this distance can be infinite if the edge is unbounded. As it can be seen for instance in \cite{Kuchment-2008}, one can define $L^p$ and Sobolev spaces on such graphs. For instance, for $p\ge 1$, the space $L^p(G)$  can be defined as the set of measurable functions $f$ satisfying:
$$ 
\| f\|_{L^p(G)}^2 = \sum_{e\in E} \| f\|_{L^p(e)}^2 < +\infty\,,
$$
while the Sobolev space $H^1(G)$ consists of all continuous functions $f$ on $G$ belonging to $H^1(e)$ for each edge $e$, and such that
$$
\| f\|_{H^1(G)}^2 = \sum_{e\in E} \| f\|_{H^1(e)}^2< +\infty\,.
$$
A metric graph $G$ will be called a quantum graph if associated to $G$ there is a Hamiltonian $\mathcal H$ that acts as a second order operator on the edges, and that is accompanied by ``appropriate" vertex conditions. In this paper we will consider the Hamiltonian associated to the sesquilinear form
$$
h[f, g]:= \sum_{e\in E} \int_e (f'\, \overline{g'}+f\, \overline{g})\, dx
$$
on $H^1_0(G)$, which is associated to the Hamiltonian  $ - v'' + v $ endowed with the
 so-called Kirkhoff conditions at the internal vertices of $G$: for each vertex $V$, if $\{e_{k_1}, e_{k_2}, \dots, e_{k_L}\}$  are all the edges sharing $V$ as an endpoint, then,
 $$
\sum_{j=1}^L f_j'(V+)=0\,,
$$
where $f_j'(V+)$ denotes the derivative of $f$ at $V$ in the outward direction towards the edge $e_{k_j}$ at $V$. Here we will work with this Hamiltonian on metric locally finite graphs in the Dirichlet case, that is, in the case where in the outer vertices of the graph,  those belonging only to one edge, the functions $f$ satisfy the boundary condition  $f(v)=0$. 

In this paper we will  analyze the functional inequalities
\be{ineq}
\(\int_G  |v|^p\, dx\)^{2/p}  \le  C_p(G)\,\int_G (|v'|^2 + |v|^2)\, dx \,,  \quad\mbox{for all functions }\; f\in H^1_0(G)\,,
\ee
with  $p \in (2, +\infty)$, where $C_p(G)$ denotes the best constant in the above inequality. The same can be done for other Hamiltonians of the same kind. Our choice of Hamiltonian is done in order to be able to provide a set of results providing  precise information about the value of the best constants and the extremal functions in the inequalities. The more complicated the Hamiltonian, the more difficult this task will be.

Note that the best constant $C_p(G)$ can also be defined by
\be{min}
\frac{1}{C_p(G)} = \inf_{v\in H^1(G)}\;  \frac{\int_G (|v'|^2 + |v|^2)\, dx }{\(\int_G  |v|^p\, dx\)^{2/p}}\,,
\ee
and a function $v$ is a minimizer for \eqref{min} if and only if it is a extremal function for \eqref{ineq}. Moreover, since in any given open domain $D$,  for all functions $v\in H^1(D)$, $ \int_D | |v|'|^2\,dx \le \int_D |v'|^2\,dx$, it is trivial to see that there is always a non-negative minimizer for the above problem. Moreover, because of the regularity of solutions of the ODE on any interval, there is no minimizer which changes sign in the middle of one of the edges. But there could be minimizers which are positive on some edges and negative on others; in that case they must be equal to $0$ in the vertices separating them, of course. In the sequel we will then consider only non-negative extremal functions.

For simplicity of notation, let us define the functional $F_{G, p}(f)$ involved in the above minimization problem by
$$F_{G, p}(f) = \frac{\int_G (|v'|^2 + |v|^2)\, dx }{\(\int_G  |v|^p\, dx\)^{2/p}}\,.
$$

\begin{prop}\label{Prop:ODE}  Note that up to multiplication by a constant, a  minimizer for \eqref{min} satisfies the set of ODEs
\be{ODE}
-v'' + v = |v|^{p-2}v  \;\mbox{ in }\; e,\quad \forall \,e\in E\,. 
\ee
together with the boundary condition $v=0$ at the outer vertices of $G$ and the Kirkhoff conditions at all inner vertices of $G$. Moreover, if $v$ is an extremal function for \eqref{ineq}, i.e., a solution to \eqref{min},   we have
\be{otherdef}
\frac{1}{C_p(G)} =  \| v\|_{ L^p(G)}^{p-2} \,.
\ee
\end{prop}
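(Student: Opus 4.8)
The plan is to read \eqref{min} as a constrained minimization problem and to obtain \eqref{ODE} from the vanishing of the first variation, paying attention to the graph's vertex structure. The starting observation is that the Rayleigh-type quotient $F_{G,p}$ is homogeneous of degree zero, $F_{G,p}(t\,v)=F_{G,p}(v)$ for every $t\neq 0$. This is precisely why a minimizer is determined only up to a multiplicative constant, and it allows me to fix the normalization $\int_G|v|^p\,dx=1$. Under this normalization, minimizing $F_{G,p}$ is equivalent to minimizing the energy $\int_G(|v'|^2+|v|^2)\,dx$ subject to $\int_G|v|^p\,dx=1$.

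First I would compute the first variation. For every $\phi\in H^1_0(G)$ the maps $\epsilon\mapsto\int_G(|(v+\epsilon\phi)'|^2+|v+\epsilon\phi|^2)\,dx$ and $\epsilon\mapsto\int_G|v+\epsilon\phi|^p\,dx$ are $C^1$ near $\epsilon=0$ (differentiability of the $L^p$ term follows from the Sobolev embedding $H^1(e)\hookrightarrow L^\infty(e)$ on each edge together with $p>2$, the sums over edges being controlled by the finiteness of the quantities in \eqref{ineq}). Setting the derivative of the constrained problem to zero produces a Lagrange multiplier $\lambda\in\R$ such that
\[
\int_G\left(v'\,\phi'+v\,\phi\right)dx=\lambda\int_G|v|^{p-2}v\,\phi\,dx\qquad\text{for all }\phi\in H^1_0(G).
\]
Choosing $\phi=v$ and using the normalization gives $\lambda=\int_G(|v'|^2+|v|^2)\,dx=F_{G,p}(v)=1/C_p(G)$.

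Next I would interpret this weak identity on the graph. Testing against $\phi$ supported in the interior of a single edge $e$ shows that $-v''+v=\lambda\,|v|^{p-2}v$ holds on $e$, first in the distributional and then, by the usual ODE bootstrap (the nonlinearity is smooth for $p>2$), in the classical sense. Integrating by parts edge by edge and regrouping the boundary contributions vertex by vertex, the terms at the outer vertices vanish because $\phi\in H^1_0(G)$, while at each inner vertex $V$ the continuity of $\phi$ across $V$ factors the contribution into $\phi(V)\sum_j v_j'(V+)$; the arbitrariness of $\phi(V)$ then forces the Kirkhoff condition $\sum_j v_j'(V+)=0$. Finally, rescaling $w=\sigma v$ with $\sigma=\lambda^{1/(p-2)}$ absorbs the multiplier and yields exactly \eqref{ODE}, which is the meaning of ``up to multiplication by a constant''. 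I expect this bookkeeping of the vertex terms to be the only delicate point; the interior equation and the regularity are routine.

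For the identity \eqref{otherdef}, I take $v$ to be the rescaled function just constructed, i.e.\ a minimizer of \eqref{min} that solves \eqref{ODE} with coefficient one. Testing \eqref{ODE} against $v$ itself gives
\[
\int_G\left(|v'|^2+|v|^2\right)dx=\int_G|v|^p\,dx ,
\]
and dividing by $\left(\int_G|v|^p\,dx\right)^{2/p}$ yields
\[
\frac{1}{C_p(G)}=F_{G,p}(v)=\left(\int_G|v|^p\,dx\right)^{1-2/p}=\|v\|_{L^p(G)}^{\,p-2},
\]
where the first equality holds because, by scale invariance of $F_{G,p}$, this $v$ is still a minimizer of \eqref{min}.
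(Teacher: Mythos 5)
Your proof is correct, and it supplies in full the standard constrained Euler--Lagrange argument (first variation, identification of the multiplier as $1/C_p(G)$, recovery of the Kirkhoff conditions from the vertex boundary terms, rescaling to absorb the multiplier, and testing against $v$ for \eqref{otherdef}) that the paper leaves entirely implicit -- the proposition is stated there as a remark with no written proof. The only point worth flagging is cosmetic: \eqref{min} in the paper takes the infimum over $H^1(G)$, while you (correctly, given the Dirichlet setting of \eqref{ineq}) work in $H^1_0(G)$, which is what makes the boundary terms at the outer vertices vanish.
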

\noindent {\bf Definition. }
We will say that a function $v\in H^1_0(G)$ satisfying \eqref{ODE} and the Kirkhoff  conditions at all inner vertices of $G$ is a solution to the Kirkhoff-ODE system in $G$.

 \medskip
In this paper we analyse the value of the constants $C_p(G)$, and in particular whether they are achieved or not, that is,  whether there exist  solutions to the minimization problem \eqref{min} or not. In parallel, we will investigate the existence and multiplicity of solutions to the Kirkhoff-ODE system in $G$. We will prove some general results, and analyse a set of examples for which we can make these  results more precise.

In some cases,  depending of the structure of the graph, we will be able to prove the existence of solutions to  \eqref{min}. In some other cases, the non-existence of solutions to the Kirkhoff-ODE system in $G$ will automatically imply the non-existence of minimizers for \eqref{min}. In other cases we will only prove that the existence of solutions to \eqref{min}, that is, of extremal functions for \eqref{ineq}, depends on a strict inequality between the infima of two related energy functionals, but assessing that strict inequality will not be easy in most cases. Although the problem studied in this paper is very simple,  a large variety of possible situations and results can arise.

In Section \ref{Sec:phase-portrait}  we will analyse the easiest cases, when $G$ is the line $\R$ or the half-line $\R_+$. Section \ref{Sec:existence} contains a number of general results about the attainability of the best constants $C_p(G)$ in \eqref{ineq}, that is, the existence of minimizers for \eqref{min}. In Section \ref{Sec:Dirichlet} we will describe in detail several classes of bounded and unbounded graphs, for which we explore not only the attainability of the best constants $C_p(G)$, but also the existence and multiplicity of solutions to the Kirkhoff-ODE system in $G$.

\bigskip
\section{The phase plane, the line and the half-line}\label{Sec:phase-portrait}

In order to understand well the results that  will be stated and proved in this paper, let us first discuss the phase plane of the dynamical system associated to the ODE equation \eqref{ODE} in $\R$.  In that phase plane there are two centers, the points $(\pm1, 0)$; an instable point, the origin; two heteroclinic orbits, one corrresponding to a positive solution and the other one, to a negative solution;  periodic orbits corresponding to either positive or negative functions, which turn around the two centers; and finally other periodic orbits corresponding to sign changing solutions which turn around the origin. For more information about the periodic solutions see \cite{Yagasaki-2013, Ben-Des-Loss-2016} and references therein. See also Section 5.2 in~\cite{MR3352243} for a similar phase plane analysis. Concerning the explicit heteroclinic orbits, see \cite{Weinstein-1986, Dol-Est-Lap-Loss-2014} for instance. See Figure  \ref{Fig:phase-portrait} for a graphical representation of the phase plane.

\begin{figure}[ht]\label{Fig:phase-portrait}
\begin{center}
\includegraphics[width=6cm,height=4cm]{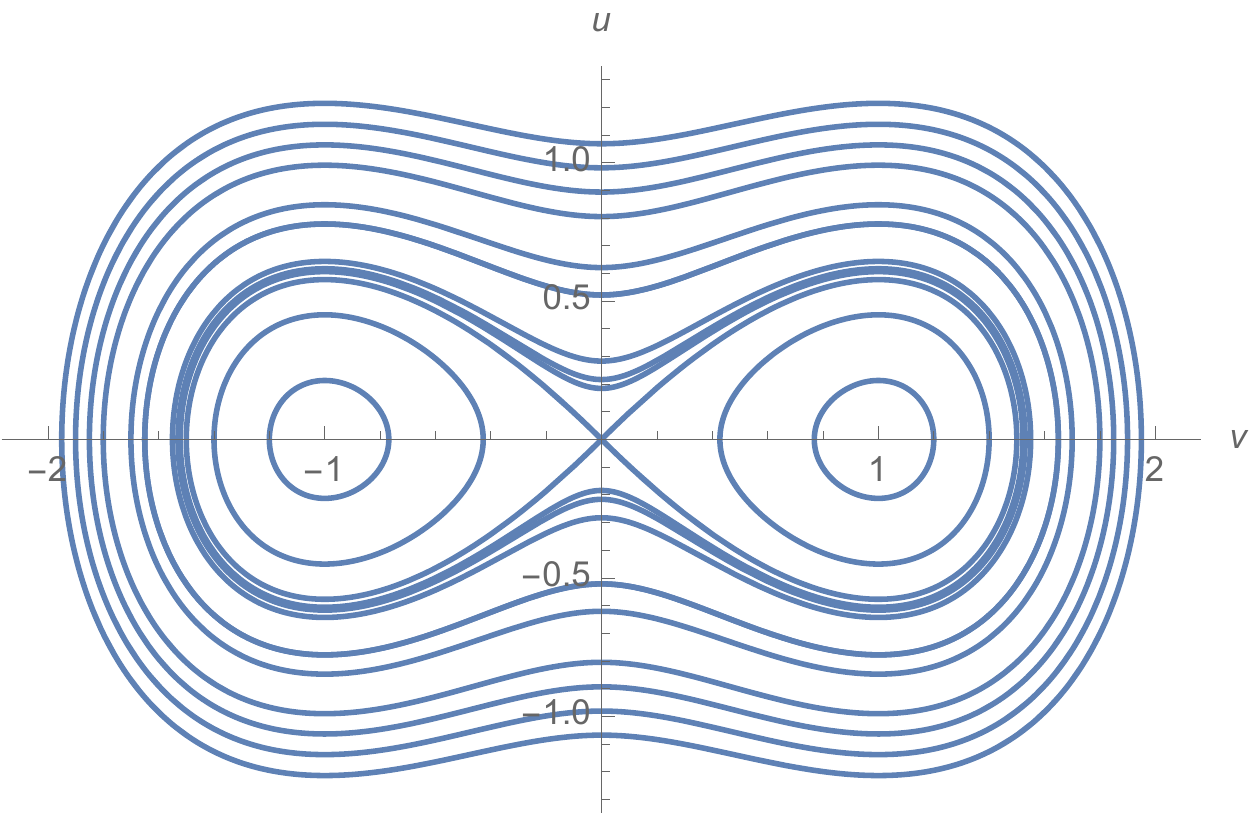}
\caption{ Phase plane for the dynamical system $ v'=u,\; -u'+v= |v|^{p-2}v\;\; \mbox{in}\;\;\R$, \mbox{ with  }  $p=3$  }
\end{center}
\end{figure}

Below we  discuss the cases of the line and the half-line, while in the next sections we will examine the cases of star-graphs, that is, sets of half-lines joining at the origin and other more complicated graphs.

The case of the line $\R$ is well-known, and corresponds to one of the heteroclinic orbits in the above figure, see \cite{Weinstein-1986, Dol-Est-Lap-Loss-2014}.
\begin{prop}\label{Prop:line}
Let $p>2$. Then, the unique non-negative solution  of the equation
\be{solution-R}
-v'' + v = |v|^{p-2}v  \;\mbox{ in }\; \R
\ee
is
\be{vbar}
\bar v(s):= \(\frac{p}2\)^\frac1{p-2} \(\cosh\(\frac{(p-2)\,s}2\)\)^{-\frac2{p-2}}
\ee
and 
\[
\frac1{C_p(\R)}= I_{p,0}:= \frac{\int_\R |\bar v'|^2 + |\bar v|^2\, ds}{\(\int_\R  |\bar v|^p\,ds\)^{2/p}} = \(\int_\R|\bar v|^p\)^\frac{p-2}{p}= \(\frac{p}2\)^{\frac2p} \,\( \frac { \sqrt{\pi}\, \Gamma\(2+\frac{2}{p-2}\)}{\Gamma\(\frac12+\frac{p}{p-2}\)}\)^\frac{p-2}{p}\,.
\]
The fonction $-\bar v$ is the unique non-positive solution to \eqref{solution-R}. Moreover, $\pm \bar v$ are the unique solutions to \eqref{solution-R} in the space $H^1_0(\R)$. \end{prop}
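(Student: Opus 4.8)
The plan is to treat \eqref{solution-R} as an autonomous planar dynamical system and exploit its first integral, exactly the phase plane depicted in Figure \ref{Fig:phase-portrait}. First I would note that the equation is autonomous, so multiplying \eqref{solution-R} by $v'$ and integrating yields the conserved energy $E := -\tfrac12 (v')^2 + \tfrac12 v^2 - \tfrac1p |v|^p$, constant along trajectories. Any solution $v \in H^1(\R)=H^1_0(\R)$ satisfies $v, v' \to 0$ at $\pm\infty$ (the equation gives $v\in C^2$ and the $H^1$ bound forces decay), hence $E = 0$ along such a solution. This is the key reduction: the admissible trajectories are precisely those on the zero-energy level set, i.e. the orbits bi-asymptotic to the saddle equilibrium at the origin.

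On $\{E=0\}$ with $v \ge 0$ one obtains the separable first-order equation $(v')^2 = v^2\bigl(1 - \tfrac2p v^{p-2}\bigr)$. A nontrivial non-negative orbit must attain its maximum where $v'=0$, i.e. at $v_{\max} = (p/2)^{1/(p-2)}$, which already fixes the prefactor in \eqref{vbar}. Integrating the separable equation (or, more quickly, substituting \eqref{vbar} back into \eqref{solution-R} and verifying directly) produces $\bar v$; by translation invariance I would normalise the peak to sit at $s=0$, which selects the even representative $\bar v$. Uniqueness of the non-negative solution up to translation then follows from uniqueness for the ODE initial value problem together with the geometry of the level set: the only bounded nontrivial positive orbit on $\{E=0\}$ is the single loop bi-asymptotic to the origin, and the saddle is approached only as $s\to\pm\infty$, so a trajectory can neither leave nor re-enter it in finite ``time''.

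The full $H^1_0(\R)$ statement is the same argument read on the whole phase plane. Any $H^1(\R)$ solution lies on $\{E=0\}$ and is bi-asymptotic to the origin, hence belongs to the intersection of the stable and unstable manifolds of the saddle. That intersection consists of exactly two loops — the one in the half-plane $v>0$ giving $\bar v$ and its mirror image in $v<0$ giving $-\bar v$ (again up to translation), together with the trivial equilibrium. Since the two loops meet only at the origin, which takes infinite ``time'' to reach, no solution can pass from one to the other; this rules out sign-changing and multi-bump solutions and shows that $\pm\bar v$ are the only nontrivial elements of $H^1_0(\R)$ solving \eqref{solution-R}.

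Finally, for the best constant I would first use the variational identity: multiplying \eqref{solution-R} by $\bar v$ and integrating by parts (legitimate by the decay) gives $\int_\R (|\bar v'|^2 + |\bar v|^2)\,ds = \int_\R |\bar v|^p\,ds$, so the Rayleigh quotient collapses to $I_{p,0} = \bigl(\int_\R |\bar v|^p\bigr)^{(p-2)/p}$, consistent with \eqref{otherdef}. It then remains to evaluate $\int_\R |\bar v|^p\,ds$ explicitly. Substituting \eqref{vbar} and setting $t = \tfrac{(p-2)s}{2}$ reduces this to the classical integral $\int_\R (\cosh t)^{-b}\,dt = \frac{\sqrt\pi\,\Gamma(b/2)}{\Gamma((b+1)/2)}$ with $b = \tfrac{2p}{p-2}$; simplifying the resulting Gamma factors by means of $\Gamma(1+x)=x\,\Gamma(x)$ yields precisely the stated expression for $1/C_p(\R)$. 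I expect the genuine conceptual obstacle to be the uniqueness part — excluding all other $H^1$ solutions — which is exactly where the phase-plane structure does the real work; the construction of $\bar v$ and the Gamma-function evaluation are then routine computations once $E=0$ has been established.
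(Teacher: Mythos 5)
The paper does not actually prove this proposition: it appeals to the phase portrait of Section \ref{Sec:phase-portrait} and cites \cite{Weinstein-1986, Dol-Est-Lap-Loss-2014}. Your phase-plane argument is precisely the argument the paper has in mind, and the classification part of it is correct and complete: the conserved quantity $E=-\tfrac12 (v')^2+\tfrac12 v^2-\tfrac1p|v|^p$, the observation that $H^1(\R)$ solutions decay together with their derivatives so that $E=0$, and the fact that the nontrivial connected components of $\{E=0\}\setminus\{(0,0)\}$ are exactly the two orbits bi-asymptotic to the saddle (the paper's figure caption calls them heteroclinic; your description as homoclinic loops is the accurate one) yield that every nontrivial $H^1_0(\R)$ solution is a translate of $\pm\bar v$. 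The verification of \eqref{vbar}, the identity $\int_\R\big(|\bar v'|^2+|\bar v|^2\big)\,ds=\int_\R|\bar v|^p\,ds$, and the $\Gamma$-function evaluation via $\int_\R(\cosh t)^{-b}\,dt$ with $b=2p/(p-2)$ all check out.

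There is, however, one genuine gap, in the assertion $1/C_p(\R)=I_{p,0}$. Evaluating the Rayleigh quotient at $\bar v$ only gives $1/C_p(\R)\le F_{\R,p}(\bar v)=I_{p,0}$, since $1/C_p(\R)$ is defined in \eqref{min} as an infimum over all of $H^1(\R)$. To obtain the reverse inequality you must either (i) prove that the infimum in \eqref{min} is attained on $\R$ --- which is not automatic because of translation invariance and requires a concentration-compactness or rearrangement argument (note that Theorem \ref{Thm:conc-comp} explicitly excludes the case of a chain equivalent to $\R$) --- after which Proposition \ref{Prop:ODE} and your uniqueness result identify the normalized minimizer with a translate of $\bar v$; or (ii) prove the sharp inequality $F_{\R,p}(v)\ge I_{p,0}$ directly for all $v$, e.g.\ by reducing to nonnegative symmetric-decreasing functions. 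Your write-up asserts the equality but supplies neither step; that step is exactly the content of the sharp one-dimensional Gagliardo--Nirenberg inequality established in the references the paper cites.
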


The above phase plane shows what happens in the case of the half-line: all solutions of \eqref{ODE} satisfying $v(0)=0$, $v'(0)>0$ are periodic in $\R$, and therefore cannot tend to $0$ at infinity. This means the following.  

\begin{lemma}\label{Lemma:half-line}
Let $p>2$. There is no non-negative solution of the equation
\[
-v'' + v = |v|^{p-2}v  \;\mbox{ in }\; \R_+=(0, +\infty)
\]
belonging to the space $H^1_0(\R_+)$. Therefore, the constant $C_p(\R_+)$ is never achieved.
\end{lemma}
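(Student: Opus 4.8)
The plan is to prove non-existence by exploiting the phase-plane structure already described for the ODE on $\R$. The key observation is that any solution on $\R_+$ in $H^1_0(\R_+)$ must satisfy $v(0)=0$ (the Dirichlet condition at the outer vertex), and membership in $H^1_0(\R_+)$ forces decay at infinity, namely $v(s)\to 0$ and $v'(s)\to 0$ as $s\to+\infty$. I would translate these two requirements into phase-plane language: the trajectory $s\mapsto (v(s), v'(s))$ starts on the line $\{v=0\}$ (the $v'$-axis) at $s=0$, and must converge to a rest point of the system as $s\to+\infty$.

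The central step is to identify which rest point the trajectory could approach. The autonomous system is $v'=u$, $u'=v-|v|^{p-2}v$, whose equilibria are exactly $(0,0)$, $(1,0)$ and $(-1,0)$. Since $v\to 0$ and $v'=u\to 0$, the only candidate limit is the origin. So I would argue that a solution in $H^1_0(\R_+)$ corresponds to a trajectory emanating from the $v'$-axis and lying on the stable manifold of the origin. The conserved energy of the system is $H(v,u)=\tfrac12 u^2 - \tfrac12 v^2 + \tfrac1p|v|^p$, which is constant along trajectories. Evaluating $H$ at the origin gives $H=0$; hence any trajectory converging to the origin must lie on the level set $\{H=0\}$. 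At the initial point $(0, v'(0))$ on the $v'$-axis we have $H = \tfrac12 v'(0)^2$, so the level-set constraint forces $v'(0)=0$. But then the trajectory starts at the origin itself, and by uniqueness for the ODE it is the constant zero solution.

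**First I would** therefore set up the energy functional $H$, verify $\tfrac{d}{ds}H(v(s),u(s))=0$ directly, and note $H(0,0)=0$. Then I would establish, using that $v\in H^1_0(\R_+)$ implies $\liminf_{s\to\infty}(|v(s)|+|v'(s)|)=0$ together with conservation of $H$, that $H\equiv 0$ along the trajectory. Combined with the Dirichlet condition $v(0)=0$, this pins down $v'(0)=0$, and uniqueness of the initial-value problem yields $v\equiv 0$. In particular there is no nontrivial non-negative solution. To avoid the sign-changing caveat mentioned in the text, it suffices to run the argument for the non-negative solution sought in the statement; by the earlier remark that positive-on-an-edge solutions do not vanish in the interior of an edge, a non-negative $H^1_0(\R_+)$ solution is either identically zero or strictly positive on $(0,\infty)$, and the energy argument rules out the latter.

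**The hard part will be** making the decay-at-infinity claim fully rigorous: strictly speaking $v\in H^1_0(\R_+)$ gives only that $v$ and $v'$ are in $L^2(\R_+)$, which forces $v(s)\to 0$ along a sequence but not a priori that $v'(s)\to 0$ pointwise. I would close this gap by a standard bootstrap: the ODE gives $v''=v-|v|^{p-2}v$, so $v''\in L^2$ as well once $v\in L^\infty$ (which follows from the Sobolev embedding $H^1(\R_+)\hookrightarrow L^\infty$), whence $v'\in H^1(\R_+)$ and so $v'(s)\to 0$. The $H\equiv 0$ conclusion then follows cleanly. Alternatively, and more in the spirit of the phase-plane discussion, one can appeal directly to the figure: every trajectory crossing the $v'$-axis away from the origin is periodic (it winds around a center or around the origin) and hence never converges to a rest point, which is exactly the non-decaying behavior that excludes membership in $H^1_0(\R_+)$. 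Either route gives the final claim that $C_p(\R_+)$ is never achieved, since by Proposition~\ref{Prop:ODE} any minimizer for \eqref{min} would solve the Kirkhoff-ODE system, here just the Dirichlet problem on $\R_+$, which we have shown has no nontrivial non-negative solution.
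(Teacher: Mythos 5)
Your argument is correct and is essentially the paper's own approach: the paper justifies the lemma by the phase-plane observation that every trajectory with $v(0)=0$, $v'(0)\neq 0$ is periodic and hence cannot decay. Your use of the conserved quantity $H(v,u)=\tfrac12 u^2-\tfrac12 v^2+\tfrac1p|v|^p$ (zero on any trajectory converging to the origin, but equal to $\tfrac12 v'(0)^2$ at the starting point) together with the bootstrap giving $v(s),v'(s)\to 0$ is exactly the rigorous version of that picture, so the two proofs coincide in substance.
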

 A different, direct, proof of this result is given below.

\bigskip
\section{Existence and non-existence of extremal functions for \eqref{ineq} or, equivalently, of minimizers for \eqref{min}}\label{Sec:existence} 
 Concerning the existence of minimizers for the minimization problem \eqref{min} for general graphs $G$, there are two very distinct cases depending on the properties of $G$: either the graph is bounded, and in this case the answer is quite direct and positive. Or the graph is unbounded and in this case the situation can be more complicated depending on the graph's structure. The bounded case is very simple, as it can be seen below.

\begin{prop}\label{Prop:compact-ms}
Let $p>2$.  Assume that the graph $G$ is locally finite and bounded. There always exists at least one solution of the minimisation problem \eqref{min}, and therefore the constant $C_p(G)$ is always achieved in \eqref{ineq}.
\end{prop}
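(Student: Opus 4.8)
The plan is to apply the direct method of the calculus of variations, whose only nontrivial ingredient in this setting is a compact Sobolev embedding that is available precisely because $G$ is bounded. Since $F_{G,p}$ is invariant under the scaling $v\mapsto \lambda v$, $\lambda\neq0$, I would first normalize and take a minimizing sequence $(v_n)\subset H^1_0(G)$ with $\int_G|v_n|^p\,dx=1$ and $\int_G(|v_n'|^2+|v_n|^2)\,dx\to 1/C_p(G)$ (the infimum is finite because the continuous embedding $H^1(G)\hookrightarrow L^p(G)$ already gives a valid, finite constant in \eqref{ineq}). In particular the $H^1(G)$ norms of the $v_n$ are uniformly bounded, so after passing to a subsequence we may assume $v_n\rightharpoonup v$ weakly in $H^1(G)$. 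Because $H^1_0(G)$ is a closed, hence weakly closed, subspace of $H^1(G)$ --- the continuity requirement at the inner vertices and the vanishing condition at the outer vertices being preserved in the limit --- the weak limit $v$ still belongs to $H^1_0(G)$.

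The key step is to upgrade this weak convergence to strong convergence in $L^p(G)$. Since $G$ is locally finite and bounded, it consists of finitely many edges, each of finite length, so $G$ has finite total measure. On each edge $e$ the one-dimensional Rellich theorem gives that the embedding $H^1(e)\hookrightarrow L^p(e)$ is compact for every $p\in(2,+\infty)$ --- indeed, on a bounded interval $H^1(e)$ embeds compactly into $C(\overline e)$ --- and since there are only finitely many edges these patch together into a compact embedding $H^1(G)\hookrightarrow L^p(G)$. Consequently $v_n\to v$ strongly in $L^p(G)$, which forces $\int_G|v|^p\,dx=1$; in particular $v\neq 0$.

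It then remains to pass to the limit in the quotient. The numerator is the square of the Hilbert-space norm associated to the form $h[\cdot,\cdot]$ on $H^1(G)$, hence weakly lower semicontinuous, so
\[
\int_G(|v'|^2+|v|^2)\,dx\;\le\;\liminf_{n\to\infty}\int_G(|v_n'|^2+|v_n|^2)\,dx\;=\;\frac{1}{C_p(G)}\,.
\]
Combined with $\int_G|v|^p\,dx=1$ this yields $F_{G,p}(v)\le 1/C_p(G)$, and since $1/C_p(G)$ is by definition the infimum of $F_{G,p}$, equality must hold. Hence $v$ is a minimizer for \eqref{min} and the best constant is achieved in \eqref{ineq}. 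The only place where the hypothesis that $G$ is bounded enters --- and the only genuine obstacle in the argument --- is the compact embedding used in the key step; for unbounded graphs this compactness may fail, and a minimizing sequence can lose mass at infinity, which is exactly what makes the unbounded case delicate and is the subject of the later sections.
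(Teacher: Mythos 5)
Your proof is correct and follows essentially the same route as the paper: normalize the minimizing sequence in $L^p$, use the compact embedding $H^1_0(G)\hookrightarrow L^p(G)$ (valid since $G$ is bounded with finitely many edges) to get strong $L^p$ convergence of a weakly convergent subsequence, and conclude by weak lower semicontinuity of the $H^1_0(G)$ norm. The only difference is that you spell out why the embedding is compact, which the paper takes for granted.
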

\begin{proof}
Let  $ \{v_n\}_n$ a minimizing sequence for \eqref{min} that we normalize to satisfy $\int_{G} |v_n|^p\,dx =1$ (without loss of generality). Trivially, the sequence  $ \{v_n\}_n$ is bounded in $H^1_0(G)$ and since this space is compactly embedded in $L^p(G)$, up to subsequences, $ \{v_n\}_n$ is compact in $L^p(G)$ and converges, weakly in $H^1_0(G)$ and strongly in $L^p(G)$,  to some $v\in H^1_0(G)$ satisfying
$$
\int_{G} |v|^p\,dx =1\,.
$$
By the lower  semicontinuity of the $H^1_0(G)$ norm,
 $v$ is a minimizer for \eqref{min}.
\end{proof}

 Let us next give some definitions that will be useful in the sequel of the paper.
 
 To deal with the case of  locally finite unbounded graphs, let us first define a  {\it chain} $C$ as a  sequence of edges $\{e_i\}_{i= 1, \dots, N}$, $N\ge1$ and possibly $N=\infty$,  of lengths $\ell_i$  and vertices $V^\pm_i$ such that if $e_i= [V^-_i, V^+_i]$ and for all $i\ge 1$, $V^+_i= V^-_{i+1}$. Note that if the extremal edges of the chain are unbounded, then, $V^-_1$ and $V^+_N$ could find themselves  at infinity, and the corresponding lengths would be equal to  $+\infty$.
 
If the chain is bounded, a function $v\in H^1_0(C)$ can be identified with a function $w_x$ in $H^1_0(x, x+\sum_i \ell_i)$ for any $x\in \R$ and there is equality for the  $L^q$ and $H^1_0$ norms of $v$ and $ w_x$. If the chain is semi-bounded (resp. unbounded in the two directions), then the identification can be done with $\R_+$ (resp. $\R$). This means that the {\it angles} between the edges of a chain do not play any role in the computation of those norms. In the above cases we will speak of {\it equivalence} of the chain $C$ with  the interval $(x, x+\sum_i \ell_i)$,  $\R_+$ or  $\R$.  
  
A consequence of the above is  that if a chain $C$  is semi-bounded, then, $C_p(C) = C_p(\R_+)$, while if  it is unbounded on both sides, then, $C_p(C) = C_p(\R)$. 
  
 In the case of unbounded graphs, the most general result that we can prove about \eqref{min} is the following.
 
 \begin{thm}\label{Thm:conc-comp}
 Let $p>2$.  Assume that $G$ is a locally finite unbounded graph. Then, except in the trivial case when $G$ is a chain ``equivalent" to $\R$, all minimizing sequences for the minimization problem \eqref{min} are either relatively compact in $H^1_0(G)$ or there is a function $v\in H^1_0(G)$ and a sequence $\{x_n\}_n\in G$ such that
 $$
 |x_n| \longrightarrow_{_{\hspace{ -4mm}  n}} \hspace{3mm} +\infty\,,      
 $$
 for all $\epsilon>0$ there exists $R>0$ such that
 $$
 \int_{B_R(x_n)\cap G} |v(x)|^p\, dx \ge 1- \epsilon\,,
 $$ 
 and
 $$
 \| v_n - v\|_{H^1_0(G)} \longrightarrow_{_{\hspace{ -4mm}  n}} \hspace{3mm}  0\,.
 $$
 \end{thm}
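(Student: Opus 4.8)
The plan is to transpose P.-L.~Lions' concentration-compactness principle to the metric graph $G$. Fix a minimizing sequence $\{v_n\}_n$ for \eqref{min}, normalized so that $\int_G|v_n|^p\,dx=1$; by the very definition of the infimum it is bounded in $H^1_0(G)$, and I abbreviate $E(v):=\int_G(|v'|^2+|v|^2)\,dx$ and $I:=1/C_p(G)$. Viewing $\rho_n:=|v_n|^p\,dx$ as a sequence of probability measures on $G$, I would introduce the concentration function $Q_n(R):=\sup_{x\in G}\int_{B_R(x)\cap G}|v_n|^p\,dx$ and, after extracting a subsequence by a diagonal argument so that $Q_n(R)\to Q(R)$ for each $R$, set $\mu:=\lim_{R\to\infty}Q(R)\in[0,1]$. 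The three classical alternatives are then \emph{vanishing} ($\mu=0$), \emph{dichotomy} ($0<\mu<1$) and \emph{concentration} ($\mu=1$), and the heart of the proof is to discard the first two.

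To rule out vanishing I would prove the graph analogue of Lions' vanishing lemma: since each edge is one-dimensional and $H^1(e)\hookrightarrow L^\infty(e)$, the local interpolation $\int|v_n|^p\le \|v_n\|_{L^\infty}^{p-2}\int|v_n|^2$ summed over a cover of $G$ by balls of radius $R$ shows that $Q_n(R)\to0$ forces $\|v_n\|_{L^p(G)}\to0$, contradicting the normalization. To rule out dichotomy I would use the scaling relation $I_\lambda:=\inf\{E(v):\int_G|v|^p=\lambda\}=\lambda^{2/p}\,I$ together with the strict subadditivity
\be{subadd}
\theta^{2/p}+(1-\theta)^{2/p}>1\qquad\text{for }\theta\in(0,1),\ p>2,
\ee
which holds because $t^{2/p}>t$ for $t\in(0,1)$, so that $\theta^{2/p}+(1-\theta)^{2/p}>\theta+(1-\theta)=1$. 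If dichotomy occurred, I would cut $v_n$ with functions supported respectively inside $B_R(\cdot)$ and outside $B_{2R}(\cdot)$, built from the graph distance, so that $v_n\approx v_n^{(1)}+v_n^{(2)}$ with essentially disjoint supports, $\int|v_n^{(1)}|^p\to\theta$, $\int|v_n^{(2)}|^p\to1-\theta$, and $E(v_n)\ge E(v_n^{(1)})+E(v_n^{(2)})-o(1)$; passing to the limit and using \eqref{subadd} would give $I\ge I\bigl(\theta^{2/p}+(1-\theta)^{2/p}\bigr)>I$, a contradiction.

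There remains the concentration case $\mu=1$, which furnishes $x_n\in G$ such that for every $\epsilon>0$ there is $R>0$ with $\int_{B_R(x_n)\cap G}|v_n|^p\,dx\ge1-\epsilon$ for $n$ large. If $\{x_n\}_n$ stays in a bounded part of $G$, then all the $L^p$-mass is trapped in a fixed bounded subgraph, where $H^1_0\hookrightarrow L^p$ is compact (exactly as in the proof of Proposition~\ref{Prop:compact-ms}); the weak $H^1_0(G)$-limit $v$ then satisfies $\int_G|v|^p=1$ and, by lower semicontinuity, $E(v)\le I$, so that $v$ is a minimizer and the convergence of the norms upgrades weak to strong convergence, giving relative compactness. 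If instead $|x_n|\to+\infty$ along a subsequence, the mass escapes to infinity; recentering at $x_n$ and using local finiteness, the recentered functions converge to a profile $v$ carrying the full mass concentrated near $x_n$, which is the second alternative of the statement.

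The excluded case, when $G$ is a chain equivalent to $\R$, is precisely the one where translation invariance makes the two alternatives indistinguishable: any minimizing sequence can be translated to escape while a minimizer still exists, so the dichotomy carries no information and must be set aside. The step I expect to be genuinely delicate is ruling out dichotomy: the localizing cut-offs must be defined through the graph metric and be compatible with the continuity and Kirchhoff conditions at the vertices falling inside the annular transition region $B_{2R}(\cdot)\setminus B_R(\cdot)$, and one has to control the gradient cross-terms there. This forces the annulus to be chosen where $v_n$ carries little $H^1_0$-energy, which is where the boundedness of $\{v_n\}_n$ in $H^1_0(G)$ and a pigeonhole selection of the radius come into play.
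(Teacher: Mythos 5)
Your proposal is correct and follows essentially the same route as the paper, which itself only sketches the argument by invoking Lions' concentration-compactness: exclusion of vanishing via the one-dimensional local embedding (Lions' Lemma I.1), exclusion of dichotomy via the homogeneity/strict subadditivity $\theta^{2/p}+(1-\theta)^{2/p}>1$, and the final dichotomy between a bounded concentration point (compactness) and an escaping profile. You supply more detail than the paper does (notably on the cut-off construction for dichotomy), but the underlying argument is the same.
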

 \begin{proof}
 We will not prove this theorem in detail, since its proof is based on very well-known concentration-compactness arguments that can be found, for very similar problems, in \cite{Lions-cc-compact1, Lions-cc-compact2}. Section I.2 in \cite{Lions-cc-compact2} treats a minimization problem very similar to \eqref{min}.
 
 Let $\{v_n\}_n$ a minimization sequence for \eqref{min}. By concentration-compactness, if we consider the sequence of $L^1(G)$ functions $\rho_n:= |v_n|^p$, it is easy to see that this sequence cannot ``vanish", in the sense that it cannot verify that  for all $R>0$,
 $$
 \lim_{n \rightarrow \,+\infty} \;\sup_{x\in G} \int_{ B_R(x)\cap G} \rho_n\, dx =0\,.
 $$
 Indeed, this follows from an easy application of Lemma I.1 in \cite{Lions-cc-compact2}. Next, the so-called ``dichotomy" defined in \cite{Lions-cc-compact1} cannot hold either, as this is forbidden by the homogeneity properties of the  functional $F_{G, p}$ which defines the problem \eqref{min}. Finally, since in dimension $1$ any exponent $2<p<\infty$ is subcritical (that is, $H_0^1(G)$ embeds locally compactly into $L^p(G)$), up to subsequences, there exists a sequence of points in $G$, $\{x_n\}_n$ and a function $v\in H^1_0(G)$  such that in the norm of $H^1_0(G)$, the functions $v_n$ are very close to the function $v$ restricted to a bounded neighborhood of $x_n$, the $L^p$ norm of $v$ on those bounded neighborhoods of $x_n$ being as close to $1$ as desired.  \end{proof}
 
 \begin{remark}Interesting locally finite unbounded graphs are those which are periodic in the direction in which they are unbounded. Consider for instance  the graph $G$ which is the union of $\,\R$ and a fixed set of bounded edges attached to the points $(n,0)$ on $\R$. For this kind of graphs there is always existence of a solution to \eqref{min} because we can just apply the above theorem and by periodicity, we can always consider that up to translation, any minimizing sequence is close to a function in $H^1_0(G)$ translated by a bounded sequence of points $\{(y_n, 0)\}_n$. Indeed, it is enough to define $(y_n,0) := x_n - (k_n n, 0)$ with $k_n\in \Z$ and $|y_n|\le 2$.
\end{remark}
 
 Let us next study some other particular cases regarding the existence of minimizers for the minimization problem \eqref{min}.
 
 \begin{lemma}\label{constant-ineq} For all $p>2$ and for any locally finite graph containing an  unbounded edge, we have
\[
C_p(G) \ge C_p(\R)\,.
\]
\end{lemma}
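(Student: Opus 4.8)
The plan is to bound $C_p(G)$ from below by $C_p(\R)$ using a concentration argument along the unbounded edge. The key observation is that $C_p(\R)$ is the optimal constant for the whole line, so the inequality $C_p(G) \ge C_p(\R)$ should follow by exhibiting test functions on $G$ whose functional quotient $F_{G,p}$ can be made arbitrarily close to $F_{\R,p} = 1/C_p(\R) = I_{p,0}$; equivalently, using the definition \eqref{min}, I want to show that the infimum of $F_{G,p}$ over $H^1_0(G)$ is at most $I_{p,0}$, which gives $1/C_p(G) \le 1/C_p(\R)$ and hence the claimed inequality.

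First I would use the hypothesis that $G$ contains an unbounded edge $e_0$. Since $e_0$ is unbounded, it is equivalent (in the sense defined in the excerpt, where angles play no role in the $L^q$ and $H^1_0$ norms) to a half-line $\R_+$, or possibly to all of $\R$ if the edge is doubly infinite. The idea is to transplant the optimal profile $\bar v$ from Proposition~\ref{Prop:line} onto this edge. Concretely, I take the extremal function $\bar v$ for the line from \eqref{vbar}, which satisfies $F_{\R,p}(\bar v) = I_{p,0}$, and I consider its translates $\bar v(\cdot - t)$ for large $t$. Because $\bar v$ decays exponentially at infinity, for large $t$ almost all of its mass (in both the numerator and denominator of the quotient) is concentrated far out along the edge $e_0$, away from the endpoint vertex of $e_0$.

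Next I would build a genuine test function $w_t \in H^1_0(G)$ by truncating the translated profile: multiply $\bar v(\cdot - t)$ by a smooth cutoff supported on $e_0$ that vanishes near the finite endpoint of $e_0$ (so that the Dirichlet/Kirkhoff conditions and continuity at the vertices of $G$ are respected, $w_t$ being identically zero on all other edges). As $t \to +\infty$, the cutoff only affects an exponentially small tail of $\bar v$, so
\[
\int_G \(|w_t'|^2 + |w_t|^2\)\,dx \longrightarrow \int_\R \(|\bar v'|^2 + |\bar v|^2\)\,dx,
\qquad
\int_G |w_t|^p\,dx \longrightarrow \int_\R |\bar v|^p\,dx,
\]
and therefore $F_{G,p}(w_t) \to I_{p,0} = 1/C_p(\R)$. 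Taking the infimum in \eqref{min} over all admissible functions then yields $1/C_p(G) \le 1/C_p(\R)$, which is exactly $C_p(G) \ge C_p(\R)$.

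The main obstacle is the bookkeeping of the cutoff error: one must verify that the correction terms coming from the derivative of the cutoff, namely the cross term and the $|\text{cutoff}'|^2 |\bar v|^2$ term in the numerator, vanish as $t\to\infty$. This is where the exponential decay of $\bar v$ (visible from the $(\cosh)^{-2/(p-2)}$ form in \eqref{vbar}) is essential: choosing the cutoff to live on an interval near the endpoint whose distance from the bulk of the transplanted profile grows with $t$ forces every error term to be $O(e^{-c t})$ for some $c>0$. The only remaining subtlety is ensuring the construction is legitimate on the graph itself, i.e.\ that the edge $e_0$ has enough length available near its finite endpoint to place the cutoff; since $e_0$ is unbounded this is automatic, and if $e_0$ is doubly infinite one can even dispense with the cutoff entirely by translating the full profile deep into the edge. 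I expect this error estimate, though routine, to be the technical heart of the argument.
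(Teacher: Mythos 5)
Your proposal is correct and follows essentially the same route as the paper: the paper also transplants the line extremal $\bar v$ translated by points $P_n\to+\infty$ onto the unbounded edge, multiplies by a cutoff $\eta$ vanishing at the finite endpoint, extends by zero to the rest of $G$, and concludes from $F_{G,p}(u_n)\to 1/C_p(\R)$ that $C_p(G)\ge C_p(\R)$. The only cosmetic difference is that the paper uses a fixed cutoff near the endpoint rather than one adapted to $t$, which suffices for the same reason you identify, namely the exponential decay of the translated profile.
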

\begin{proof}
If $G$ is not equivalent to $\R$, let us consider a semi-bounded edge in $G$,  $e_0$ equivalent to $\R_+$. Rotate $G$ such that $e_0$ coincides with $\R_+$. For all $n\in \N$, let $P_n\in e_0$ be such such that $|P_n| \to +\infty$. 
Let us define $u_n$ as the product of the function $\bar v(x-P_n)$ restricted to $e_0$ and a function $\eta\in C^1(\R_+)$ which satisfies  $\eta(0)=0, \,\eta(x)=1$ for all $|x|\ge 1$, with $\bar v$ as in \eqref{vbar}. Define $u_n \equiv 0$ on the other edges of $G$. The function $u_n$ belongs to $H^1_0(e_0)\cap H^1_0(G)$ and is supported in $e_0$. Moreover,
$$
F_{G, p}(u_n) \longrightarrow_{_{\hspace{ -4mm}  n}} \hspace{3mm}   F_{\R, p}(\bar v)= \frac{1}{C_p(\R)}\,.
$$
Since
$
\frac{1}{C_p(G)}  \le F_{G, p}(u_n)
$
for all $n$,  $C_p(G) \ge C_p(\R)$.
\end{proof}

An easy application of concentration-compactness, and in particular, of Theorem \ref{Thm:conc-comp}, allows to prove the following result.
\begin{thm}\label{Thm:conc-comp2}
Let $p>2$.  Assume that $G$ is a locally finite unbounded graph  that  is unbounded only along half-lines or chains equivalent to half-lines. Then, all minimizing sequences for \eqref{min} are relatively compact in $H^1_0(G)$ if and only if 
\be{strictt}
C_p(G) > C_p(\R)\,.
\ee
and when the above strict inequality holds true, there is at least one solution to \eqref{min}.
\end{thm}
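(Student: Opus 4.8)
The plan is to combine the concentration--compactness dichotomy of Theorem~\ref{Thm:conc-comp} with the universal lower bound $C_p(G)\ge C_p(\R)$ of Lemma~\ref{constant-ineq}, so that the only alternative to compactness turns out to be precisely the borderline case $C_p(G)=C_p(\R)$. Throughout I would normalize every minimizing sequence by $\int_G|v_n|^p\,dx=1$, so that $\int_G(|v_n'|^2+|v_n|^2)\,dx=F_{G,p}(v_n)\to 1/C_p(G)$ by \eqref{min}; and I note that since $G$ is unbounded only along half-lines or chains equivalent to half-lines, $G$ is \emph{not} equivalent to $\R$, so Theorem~\ref{Thm:conc-comp} is applicable.

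For the implication \eqref{strictt}$\Rightarrow$ compactness I would take an arbitrary normalized minimizing sequence $\{v_n\}_n$ and invoke Theorem~\ref{Thm:conc-comp}. Either $\{v_n\}_n$ is relatively compact in $H^1_0(G)$, which is the desired conclusion, or the escape alternative holds: there are points $x_n\in G$ with $|x_n|\to+\infty$ capturing almost all the $L^p$-mass, i.e. for every $\epsilon>0$ there is $R>0$ with $\int_{B_R(x_n)\cap G}|v_n|^p\,dx\ge 1-\epsilon$ for $n$ large. I would rule this out using \eqref{strictt}. For $n$ large the point $x_n$ lies on an unbounded end, arbitrarily far from the vertex where it is attached, and there $G$ is isometric to an interval of $\R$ (the Kirkhoff condition at the degree-two internal vertices of a chain reduces to plain $H^1$-continuity). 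Recentering at $x_n$ and passing to a subsequence, the $v_n$ converge weakly in $H^1(\R)$ and strongly in $L^p_{loc}(\R)$ to a profile $w\in H^1(\R)$ with $\int_\R|w|^p\,dx=1$. Weak lower semicontinuity then gives
\[
\frac{1}{C_p(G)}=\lim_n\int_G(|v_n'|^2+|v_n|^2)\,dx\ge \int_\R(|w'|^2+|w|^2)\,dx\ge \frac{1}{C_p(\R)}\Big(\int_\R|w|^p\,dx\Big)^{2/p}=\frac{1}{C_p(\R)},
\]
that is $C_p(G)\le C_p(\R)$, contradicting \eqref{strictt}. Hence the escape alternative cannot occur, every minimizing sequence is relatively compact, and its limit is a minimizer for \eqref{min}.

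For the converse I would argue by contraposition: if \eqref{strictt} fails then, by Lemma~\ref{constant-ineq}, $C_p(G)=C_p(\R)$, and I exhibit a non-compact minimizing sequence. This is exactly the sequence $\{u_n\}_n$ built in the proof of Lemma~\ref{constant-ineq}: translates $\bar v(\cdot-P_n)$ of the profile \eqref{vbar}, cut off near the attaching vertex and supported on a single unbounded end $e_0$ equivalent to $\R_+$, with $P_n\to+\infty$. There $F_{G,p}(u_n)\to 1/C_p(\R)=1/C_p(G)$, so after normalization $\{u_n\}_n$ is a minimizing sequence for \eqref{min}. Because its support escapes to infinity, $u_n\rightharpoonup 0$ weakly in $H^1_0(G)$, whereas $\int_G(|u_n'|^2+|u_n|^2)\,dx\to 1/C_p(G)>0$; thus no subsequence can converge strongly in $H^1_0(G)$, since a strong limit would have to coincide with the weak limit $0$. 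This minimizing sequence is therefore not relatively compact, which completes the contraposition.

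The only genuinely delicate point is the transplantation step in the first implication: one must ensure that the escaping profile is compared with the constant $C_p(\R)$ of the whole line rather than with $C_p(\R_+)$. This is exactly what the hypothesis on the unbounded ends buys us --- along a half-line or a chain equivalent to a half-line, a ball $B_R(x_n)\cap G$ centred at $x_n\to+\infty$ is, for $n$ large, a genuine interval $(x_n-R,x_n+R)$ with no boundary effect, so the recentred profile lives on all of $\R$. I would also have to check that the weak $H^1(\R)$ limit $w$ carries the full unit mass (ruling out a loss of $L^p$-mass under recentering), which follows from the uniform concentration estimate of Theorem~\ref{Thm:conc-comp} together with the local compactness of the embedding $H^1\hookrightarrow L^p$ in dimension one.
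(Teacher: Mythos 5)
Your proposal is correct and follows essentially the same route as the paper: the dichotomy of Theorem~\ref{Thm:conc-comp} plus transplantation of the escaping profile onto $\R$ to contradict \eqref{strictt} in one direction, and the escaping truncated translates of $\bar v$ from Lemma~\ref{constant-ineq} to produce a non-compact minimizing sequence when $C_p(G)=C_p(\R)$ in the other. Your version is in fact somewhat more careful than the paper's (explicit recentering, lower semicontinuity, and the no-mass-loss check), but the underlying argument is identical.
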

\begin{remark}
Note that some unbounded graphs excluded from the above theorem are those for which one can ``escape" to infinity by following chains through edges which are not equivalent to half-lines. For instance,  infinite {\it regular} trees, that is, trees for which all vertices with equal distance to the root of the tree have same branching number (number of edges joining at that vertex) and for which the edges emanating from those vertices are more than $1$ and have same length.
\end{remark}
The following result is an easy corollary of the above theorem, Proposition \ref{Prop:ODE} and Lemmata \ref{Lemma:half-line} and \ref{constant-ineq}.
\begin{cor}\label{Cor:half-line-energy}
If a graph $G$ is a half-line or a chain equivalent to it, 
\[
C_p(G)= C_p(\R)\,.
\]
\end{cor}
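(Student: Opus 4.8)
The plan is to reduce the statement to the single case $G=\R_+$ and then squeeze $C_p(\R_+)$ between $C_p(\R)$ from below and from above. First I would invoke the equivalence discussion preceding Theorem \ref{Thm:conc-comp}: a semi-bounded chain $C$ satisfies $C_p(C)=C_p(\R_+)$, because the relevant $L^q$ and $H^1_0$ norms depend only on the (infinite) total length and not on the angles between edges. Hence it suffices to prove $C_p(\R_+)=C_p(\R)$. The lower bound $C_p(\R_+)\ge C_p(\R)$ is then immediate from Lemma \ref{constant-ineq}, applied with $G=\R_+$, which certainly contains an unbounded edge.

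For the reverse inequality I would argue by contradiction. The half-line $\R_+$ is unbounded only along a half-line, so it falls under the hypotheses of Theorem \ref{Thm:conc-comp2}. Suppose, for contradiction, that $C_p(\R_+)>C_p(\R)$. Then that theorem guarantees the existence of at least one solution $v$ of the minimization problem \eqref{min} on $\R_+$, and, as recalled after \eqref{min}, we may take $v\ge 0$. By Proposition \ref{Prop:ODE}, after multiplication by a suitable constant, $v$ solves the Kirkhoff-ODE system on $\R_+$, that is, $-v''+v=|v|^{p-2}v$ on $(0,+\infty)$ with $v(0)=0$ and $v\in H^1_0(\R_+)$, and $v$ is nonnegative. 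But Lemma \ref{Lemma:half-line} asserts that no such nonnegative $H^1_0(\R_+)$ solution exists. This contradiction rules out $C_p(\R_+)>C_p(\R)$, and together with the lower bound it yields $C_p(\R_+)=C_p(\R)$, hence the claimed equality for $G$ a half-line or a chain equivalent to it.

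The proof is short precisely because all the analytic content is already packaged in the cited results; there is no genuine obstacle, only a logical chain that must be assembled in the right order: strict inequality $\Rightarrow$ existence of a minimizer (Theorem \ref{Thm:conc-comp2}) $\Rightarrow$ Euler-Lagrange solution of the Kirkhoff-ODE system (Proposition \ref{Prop:ODE}) $\Rightarrow$ contradiction with nonexistence on the half-line (Lemma \ref{Lemma:half-line}). The two small points I would verify carefully are that $\R_+$ genuinely satisfies the hypothesis of being ``unbounded only along half-lines'' so that Theorem \ref{Thm:conc-comp2} applies, and that the normalization constant appearing in Proposition \ref{Prop:ODE} preserves the sign of $v$, so that the nonnegative minimizer produced is still a nonnegative Euler-Lagrange solution to which Lemma \ref{Lemma:half-line} applies verbatim. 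I would close by observing that the conclusion is fully consistent with Lemma \ref{Lemma:half-line}: the common value $C_p(\R_+)=C_p(\R)$ is an infimum that is simply never attained on $\R_+$, the extremizing mass escaping to $+\infty$ along the half-line.
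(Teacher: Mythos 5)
Your argument is correct and is exactly the one the paper intends: it cites Theorem \ref{Thm:conc-comp2}, Proposition \ref{Prop:ODE}, and Lemmata \ref{Lemma:half-line} and \ref{constant-ineq} as the ingredients, and you assemble them in the same order (lower bound from Lemma \ref{constant-ineq}, then strict inequality would force a nonnegative minimizer solving the Kirkhoff-ODE system on $\R_+$, contradicting Lemma \ref{Lemma:half-line}). Your two cautionary checks (that $\R_+$ satisfies the hypotheses of Theorem \ref{Thm:conc-comp2} and that the normalization preserves nonnegativity) are both fine, so nothing is missing.
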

\noindent{\bf Proof of Theorem \ref{Thm:conc-comp2}}
With  $p>2$, let $\{v_n\}_n$ be a minimizing sequence for \eqref{min} that we normalize so that for all $n$, $\int_G |v_n|^p\, dx =1$. By Theorem \ref{Thm:conc-comp}, either the sequence $\{v_n\}_n$ is relatively compact, or the sequence $\{x_n\}$ appearing in the proof of this theorem is unbounded. In that case, the function $v$, duly cut to be supported on an unbounded edge, should be a good test function for the problem of minimization in $\R$, since the graphs we are looking at are {\it equivalent} to $\R$ at infinity. But if $
C_p(G) > C_p(\R)$, this cannot happen, because 
$F_{G, p}(v_n) $ would be close to $F_{\R, p}(v) $ as $n$ increases, which is forbidden by  our assumption, since $F_{G, p}(v_n)$ approaches   $1/ C_p(G)$ as $n$ goes to $+\infty$, while $F_{\R, p}(v)$ is larger than  $1/ C_p(\R)$. So, the sequence $\{x_n\}_n$ is bounded and $\{v_n\}_n$ is relatively compact.  Finally, if the strict inequality \eqref{strictt} is not satisfied, that is, if $C_p(G)= C_p(\R)$, then, by taking almost-minimizers of the problem \eqref{min} in $\R$, we can construct minimizing sequences for \eqref{min} on $G$ so that they are not relatively compact, since their ``essential supports" escape to infinity.
\finprf

Next, let us give examples of cases where the graphs are unbounded only along half-lines, but for which the strict inequality $C_p(G) > C_p(\R)$ is not satisfied.

\begin{thm}\label{Thm:one-unbdd}
Let $p>2$.  Assume that the graph $G$ is a half-line $e_0$ plus a finite number of bounded or unbounded edges attached to $e_0$ at a unique inner vertex $P$ (the finite vertex of $e_0$). Then, there is no solution to \eqref{min} and
\be{equality}
C_p(G) =  C_p(\R)\,.
\ee
\end{thm}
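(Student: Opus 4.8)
The plan is to establish the identity $C_p(G)=C_p(\R)$ and the non-attainment \emph{simultaneously}, by comparing an arbitrary competitor with its symmetric rearrangement on the line. By Lemma \ref{constant-ineq} one already has $C_p(G)\ge C_p(\R)$, and since replacing $v$ by $|v|$ does not increase $F_{G,p}$ it is enough to work with non-negative $v\in H^1_0(G)$. The structural fact I would exploit is that $P$ is the \emph{only} inner vertex of $G$: every other endpoint is either at infinity, where an $H^1_0$ function decays, or a Dirichlet outer vertex, where it vanishes.

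To a non-negative $v\in H^1_0(G)$ I associate the symmetric decreasing rearrangement $v^\sharp$ on $\R$, even, non-increasing on $\R_+$ and equimeasurable with $v$, so that $\|v^\sharp\|_{L^q(\R)}=\|v\|_{L^q(G)}$ for all $q$. Writing $\mu(t)=|\{v>t\}|$ and using the coarea formula together with Cauchy--Schwarz, $(\#\{v=t\})^2\le\big(\sum_{v(x)=t}|v'(x)|\big)(-\mu'(t))$, I obtain the graph P\'olya--Szeg\H{o} estimate in the sharp quantitative form
\[
\int_G|v'|^2-\int_\R|(v^\sharp)'|^2\ \ge\ \int_0^{\max v}\frac{\big(\#\{v=t\}\big)^2-4}{-\mu'(t)}\,dt\ \ge\ 0 ,
\]
the last inequality because $\#\{v=t\}\ge2$ for a.e.\ $t$. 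This preimage bound is where the geometry of $G$ enters: for $0<t<v(P)$ continuity makes $v>t$ near $P$ on each of the $\deg(P)$ edges leaving $P$, each of which returns to $0$, giving at least $\deg(P)\ge2$ crossings; for $v(P)<t<\max v$ the set $\{v>t\}$ avoids $P$ and all ends, so each component sits inside one edge and contributes two crossings. Since the $L^2$ and $L^p$ norms are preserved, the displayed inequality yields $F_{G,p}(v)\ge F_{\R,p}(v^\sharp)\ge I_{p,0}=1/C_p(\R)$ for every $v$, hence $C_p(G)\le C_p(\R)$ and therefore \eqref{equality}.

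For non-attainment I would use the \emph{strict} version of the same estimate. If $v$ were a minimizer then $F_{G,p}(v)=I_{p,0}$ forces equality everywhere: $F_{\R,p}(v^\sharp)=I_{p,0}$ gives $v^\sharp=\bar v$ by Proposition \ref{Prop:line}, while the displayed identity forces $\#\{v=t\}=2$ for a.e.\ $t$. This is incompatible with $\#\{v=t\}\ge\deg(P)\ge3$ on the set $(0,v(P))$ as soon as $P$ is a genuine junction and $v(P)>0$. The two degenerate situations are handled directly: if $v(P)=0$, then by Proposition \ref{Prop:ODE} and Lemma \ref{Lemma:half-line} $v$ must vanish on $e_0$ and on every unbounded edge, so $v$ has compact support and $v^\sharp\neq\bar v$, again contradicting equality; and if $\deg(P)=2$ the graph is a chain equivalent to $\R$ (excluded throughout, since then $G$ is the line) or to $\R_+$, in which case Lemma \ref{Lemma:half-line} and Corollary \ref{Cor:half-line-energy} already give the statement.

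The step I expect to be the main obstacle is the rigorous proof of the graph P\'olya--Szeg\H{o} inequality in the quantitative form above, i.e.\ a careful coarea/Cauchy--Schwarz analysis that is valid at the vertex $P$ and at the finitely many Dirichlet ends, and that correctly produces the factor $4$ coming from the two preimages of $v^\sharp$. Once this is secured the theorem follows, and the outcome is consistent with Theorem \ref{Thm:conc-comp2}: the absence of a minimizer corresponds precisely to the failure of the strict inequality \eqref{strictt}.
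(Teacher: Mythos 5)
Your argument is correct in outline, but it follows a genuinely different route from the paper. The paper argues by contradiction from the Euler--Lagrange equation: a minimizer solves the Kirkhoff-ODE system, a phase-plane comparison at $P$ produces a two-edge chain $C\ni P$ whose outward derivatives sum to a non-negative number, and integration by parts then yields $\tfrac1{C_p(C)}\le\|v\|_{L^p(C)}^{p-2}<\tfrac1{C_p(G)}\le\tfrac1{C_p(\R)}$, contradicting Corollary \ref{Cor:half-line-energy}; the identity \eqref{equality} is then extracted from Theorem \ref{Thm:conc-comp2}. You instead compare an \emph{arbitrary} competitor with its symmetric rearrangement via a graph P\'olya--Szeg\H o inequality, which buys several things at once: the bound $C_p(G)\le C_p(\R)$ is obtained directly without concentration--compactness, the non-attainment follows from the rigidity $\#\{v=t\}=2$ a.e.\ versus $\#\{v=t\}\ge\deg(P)\ge3$ on $(0,v(P))$, and you never need to read off the inequality $|d_i|>|d_0|$ from the phase portrait. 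What your approach does not give (and does not claim) is the paper's stronger-sounding conclusion about the Kirkhoff-ODE system itself; that is harmless here since the theorem only asserts non-existence of minimizers. The one real debt in your write-up is the quantitative coarea/Cauchy--Schwarz inequality on a metric graph, including the treatment of the set where $v'=0$ and of levels where $\{v=t\}$ is infinite; this is standard (it is the rearrangement lemma of Friedlander and of Adami--Serra--Tilli) but should be cited or proved, and you correctly flag it as the crux. Two small points in your favor: your component-of-superlevel-set count (every component avoids the Dirichlet ends and infinity, hence has at least two, and at least $\deg(P)$ if it contains $P$, boundary points) is exactly the right justification for the preimage bound; and you are right to single out the degenerate case of a single unbounded attached edge, where $G$ is a $2$-star equivalent to $\R$ and the stated conclusion would actually fail --- the theorem implicitly assumes $P$ is a genuine junction, and making that exclusion explicit, as you do, is necessary in either proof.
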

\begin{proof} 
Without loss of generality, by a simple translation and rotation, we may assume that $e_0=\R_+$. Let
 $v\in H^1_0(G)$ be a minimizer for \eqref{min}. Then it satisfies the Kirkhoff-ODE system in $G$. Let us then find a chain in $G$, $C$, such that $v\in H^1_0(C)$ and the sum of the outward derivatives of $v$ at the inner vertex of $C$ is non-negative.
 Assume that there are $N$ edges $e_i$ attached to $e_0$ at the inner edge, $P$,  and call $d_i$, $i= 1, \dots, N$ the outward derivatives of $v$ at $P$. By the Kirchoff conditions, we have
\be{Kir}
\sum_{i=0}^N d_i =0\,
\ee
If $N=1$, $C= e_0 \cup e_1$,  $d_0+d_1=0$. Assume that $N>1$ and that  for all $j\ge 1$,  $d_0+d_j <0$. Then $N d_0 + \sum_{i=1}^N d_i <0$, but $N d_0 + \sum_{i=1}^N d_i = (N-1) d_0 +  \sum_{i=0}^N d_i$, and therefore, by \eqref{Kir}, $d_0<0$. Moreover, $v$ satisfies $v=0$ at the outer vertex of $e_i$ for all $i=1, \dots, N$. 
Looking at \eqref{Fig:phase-portrait}, it appears clearly that for all $i\ge 1$,  $|d_i|> |d_0|$, since the point $(v(P), d_0)$ lies on the ``positive" heteroclinic orbit, and for $i\ge 1$, all the points $(v(P), d_i)$ lie on the outer periodic orbits around the heteroclinic ones. Hence necessarily, there is $i_0\ge 1$ such that $d_0+d_{i_0}\ge 0$, and then, we will choose $C= e_0\cup e_{i_0}$.

Let us now end the proof of the theorem. By the properties of $C$, 
\[
\hspace{2cm}\frac{1}{C_p(C)} \le F_{C, p}(v) = \frac{\int_{C} \(|v'|^2 + |v|^2\)\, dx }{\(\int_{C}  |v|^p\, dx\)^{2/p}} =  \frac{ \( \int_{e_0} + \int_{e_{i_0}}\)\( |v'|^2 + |v|^2\)\, dx }{\(\(\int_{e_0} +\int_{e_{i_0}} \)|v|^p\, dx\)^{2/p}}\,.
\]
By using Proposition \ref{Prop:ODE}, the way $C$ was chosen and integration by parts, we find
\begin{multline*}
  \frac{1}{C_p(C)} \le  \frac{ \( \int_{e_0} + \int_{e_{i_0}}\) |v|^p\, dx - (d_0+d_{i_0})} { \( \int_{e_0} + \int_{e_{i_0}}  |v_j|^p\, dx\)^{2/p}}  
\le   \frac{  \int_C  |v|^p\, dx } { \( \int_C  |v|^p\, dx\)^{2/p}} = \|v\|^{p-2}_{L^p(C)}
\end{multline*}
By Lemma \ref{constant-ineq} and Proposition \ref{Prop:ODE}, 
\[
\frac{1}{C_p(\R)} \ge\frac{1}{C_p(G)} =  \| v\|_{ L^p(G)}^{p-2}  \ge  \|v\|^{p-2}_{L^p(C)}\,,
\]
and the last inequality above is strict if $G\neq C$. Hence, by Lemma \ref{constant-ineq}
\[
 \frac{1}{C_p(C)}\le   \|v\|^{p-2}_{L^p(C)} < \frac{1}{C_p(G)} \le \frac{1}{C_p(\R)}\,.
 \]
But since $C$ by itself is equivalent to a half-line, by Corollary \ref{Cor:half-line-energy},
\[
\frac{1}{C_p(C)} =\frac{1}{C_p(\R)}\,, 
\]
a contradiction with the above strict inequality.  Therefore there is no solution to the Kirkhoff-ODE system in $G$. Finally, if \eqref{equality} were not true, by Theorem \ref{Thm:conc-comp2} and Lemma \ref{constant-ineq}, there would be a solution to \eqref{min}, and therefore, by Proposition \ref{Prop:ODE}, also a solution to the Kirkhoff-ODE system in $G$. Again a contradiction.
\end{proof}

\bigskip
\section{Analysis of the solutions to the Kirkhoff-ODE system  for various families of locally finite graphs.}\label{Sec:Dirichlet} 
In this section we will explore either the case of star-graphs or the case of graphs with one, two or three edges. Of course, interesting cases arise also for a larger number of edges, but  we will not explore those in detail in this article.

We will analyze both the existence of minimizers for \eqref{min} and the existence of solutions for the Kirkhoff-ODE system. Indeed, as already explained above, the existence of minimizers for \eqref{min} implies the existence of solutions for the Kirkhoff-ODE system. Consequently, the non-existence of solutions for the Kirkhoff-ODE system automatically implies the non-existence of minimizers for \eqref{min}.

In the case when the graph has an unbounded edge, we have proved in Theorem \ref{Thm:one-unbdd} that  there is no solution to \eqref{min}. But in the next sections we will see that there are cases of graphs containing semi-bounded edges for which there are solutions to the Kirkhoff-ODE system, and others for which no solution to the Kirkhoff-ODE system exists.

\subsection{Star graphs}\label{Sec:15-infty}

A star-graph is a graph which is composed of $N$ half lines meeting at some point, for instance the origin. Let us prove next that the only star-graph for which there are solutions to the minimization problem \eqref{min} are those with $N=2$.

\begin{thm}
Let $p>2$. Assume that $G$ is a star-graph with $N$ infinite edges and a single vertex at the origin. If $N$ is even there is an infinity of non-negative (actually positive) solutions to the equation
\[
-v'' + v = |v|^{p-2}v  \;\mbox{ in }\; G
\]
but none of them is minimizing for \eqref{min} (or extremal for \eqref{ineq}) unless $N=2$. When $N=2$ all the non-negative solutions to the above equation are extremals for \eqref{ineq} and minimizers for \eqref{min}.

If $N=1$, no solution to \eqref{ODE} exists as Lemma  \ref{Lemma:half-line} shows. If $N$ is odd and larger than $1$, the unique non-negative solution to the Kirkhoff-ODE system in $G$ is  such that $v(0)= \bar v(0)$ and all the outward derivatives at the origin, in the direction of all the edges of the graph, are equal to $0$. But this solution is not a solution to the minimization problem \eqref{min}, which has no solution.
\end{thm}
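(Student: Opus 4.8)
The plan is to reduce the problem to a per-edge phase-plane analysis, since each infinite edge is isometric to $\R_+$ and a function $v\in H^1_0(G)$ must decay to $0$ along it. First I would observe that on any edge on which a non-negative solution $v$ does not vanish identically, $v$ is positive in the interior and, being a decaying trajectory of the system in Figure \ref{Fig:phase-portrait}, must lie on the positive heteroclinic orbit, i.e. on the stable manifold of the origin. Using the first integral $\tfrac12(v')^2-\tfrac12 v^2+\tfrac1p v^p\equiv 0$ valid on that orbit (the constant is $0$ because the orbit limits onto the origin), the restriction of $v$ to an edge parametrized by arclength $s\ge0$ from the central vertex is a shift $\bar v(s-a)$ of the soliton \eqref{vbar}; its value at the vertex is $c:=v(0)\in(0,\bar v(0)]$ and its outward derivative there equals $\pm d(c)$, where $d(c):=\sqrt{c^2-\tfrac2p c^p}\ge0$, with the sign $+$ for the shift that first rises to the peak ($a>0$) and $-$ for the monotone decreasing piece ($a<0$). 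The degenerate case $d(c)=0$ occurs exactly when $c=\bar v(0)=(p/2)^{1/(p-2)}$, giving the half-soliton with zero outward derivative.

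Next I would use continuity at the central vertex together with the Kirchhoff condition. Continuity forces the common value $c$ on every edge; in particular a non-negative solution cannot vanish on some edges and be positive on others (that would force $c=0$, hence $v\equiv0$ by Lemma \ref{Lemma:half-line}), so any nontrivial non-negative solution is positive on all $N$ edges with a single $c\in(0,\bar v(0)]$. The Kirchhoff relation $\sum_{i=1}^N d_i=0$ then reads, when $c<\bar v(0)$, as $(n_+-n_-)\,d(c)=0$ with $n_++n_-=N$, which is solvable iff $N$ is even (then $n_+=n_-=N/2$, and every $c\in(0,\bar v(0))$ yields a solution, giving infinitely many); when $c=\bar v(0)$ all derivatives vanish and the relation holds for every $N$, giving the symmetric half-soliton solution with $v(0)=\bar v(0)$ and all outward derivatives $0$. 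This already yields the announced counts: for $N$ even, infinitely many positive solutions; for $N$ odd and $>1$, the symmetric one is the unique solution; for $N=1$ none, by Lemma \ref{Lemma:half-line}.

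For the minimization statement I would compute the $L^p$ mass of every such solution. A short evenness computation shows that the half-soliton solution carries mass $\tfrac12\|\bar v\|_{L^p(\R)}^p$ per edge, while for $c<\bar v(0)$ a matched pair (one $+$ edge, one $-$ edge, both with $\bar v(\alpha)=c$) carries exactly $\int_{-\alpha}^{\infty}\bar v^p+\int_{\alpha}^{\infty}\bar v^p=\|\bar v\|_{L^p(\R)}^p$. Hence in all cases $\|v\|_{L^p(G)}^p=\tfrac{N}{2}\|\bar v\|_{L^p(\R)}^p$, and since any Kirkhoff-ODE solution satisfies $F_{G,p}(v)=\|v\|_{L^p(G)}^{p-2}$ by Proposition \ref{Prop:ODE}, we get $F_{G,p}(v)=\bigl(\tfrac{N}{2}\bigr)^{(p-2)/p}\,I_{p,0}$ with $I_{p,0}=1/C_p(\R)$ from Proposition \ref{Prop:line}. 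By Lemma \ref{constant-ineq}, $1/C_p(G)\le 1/C_p(\R)=I_{p,0}$, so for $N\ge3$ we have $F_{G,p}(v)>1/C_p(G)$ and no solution is minimizing; as every non-negative minimizer of \eqref{min} would be a nontrivial positive Kirkhoff-ODE solution, this shows \eqref{min} has no solution for $N\ge3$ (the same conclusion also follows from Theorem \ref{Thm:one-unbdd}, viewing the star as one half-line with $N-1\ge2$ edges attached at a single vertex). For $N=2$ the graph is a chain equivalent to $\R$, the factor $(N/2)^{(p-2)/p}$ is $1$, every solution is a translate of $\bar v$, and all of them realize $F_{G,p}(v)=1/C_p(\R)=1/C_p(G)$, hence are extremal.

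The main obstacle is the rigorous passage from the phase portrait to the two facts that (i) decay at infinity confines each active edge to the positive heteroclinic orbit, so that the vertex data reduce to the single parameter $c$ and the sign of the outward derivative, and (ii) the outward derivatives take only the values $\pm d(c)$; once this discretization of the vertex conditions is justified, the Kirchhoff parity count and the $L^p$-mass identity are elementary. A minor but necessary check is the continuity argument ruling out solutions supported on a proper subset of edges, which is what prevents a spurious two-edge minimizer from surviving inside a star with $N\ge3$.
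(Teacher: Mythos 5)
Your proposal is correct and follows essentially the same route as the paper: confinement of each active edge to the positive heteroclinic orbit, the parity count in the Kirchhoff condition forcing either $N$ even or the half-soliton with $v(0)=\bar v(0)$, and the comparison $F_{G,p}(v)=(N/2)^{1-2/p}I_{p,0}$ against $1/C_p(G)\le 1/C_p(\R)$ via Lemma \ref{constant-ineq}. You merely make explicit what the paper reads off the phase portrait (the first integral giving $d(c)=\sqrt{c^2-\tfrac2p c^p}$, the exclusion of solutions supported on a proper subset of edges, and the $L^p$-mass bookkeeping), which is a welcome tightening rather than a different argument.
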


\begin{proof}
The case $N=1$ is dealt with by Lemma  \ref{Lemma:half-line}. Assume further that $N>1$. Let $v$ be a solution of the Kirkhoff-ODE system in $G$. Since for all $x\in G$,  the pair $(v(x), v'(x))$ restricted to  any of the edges lies on the positive heteroclinic orbit, the maximum of $v$ on $G$ has to be less than or equal to the maximum of $\bar v$, that is, $\bar v(0)$. The Kirkhoff conditions at the origin imply that either $v(0)= \bar v(0)= \(\frac{p}2\)^\frac1{p-2}$ or $N$ is necessarily even. Indeed, if $N$ is odd and $v(0)<\bar v(0)$, by Proposition  \ref{Prop:line},   there are two possible values for the outward derivatives at the origin following the edges of the graph, one positive, $d$, and one negative, $-d$. For any integer $q$ less than $N$, the sum of the outward derivatives at the origin is equal to $q d - (N-q) d = (2q-N) d \neq 0$, thus violating Kirkhoff conditions. So, the only possibility for $N>1$ odd is that $v(0)= \bar v(0)$ and the outward derivatives at the origin in the direction of all the edges are all equal to $0$. In this case, since $N\ge 3$,
\[
F_{G, p}(v)= \(\frac{N}2\)^{1-\frac2p} I_{p,0} > \frac{1}{C_p(\R)} \ge \frac{1}{C_p(G)} \,,
\] 
by Lemma \ref{constant-ineq}. Therefore, $v$ is the unique solution to the Kirkhoff-ODE system in $G$, but not a solution to \eqref{min}. 

If $N$ is even, $v(0)$ can take any value in the interval $(0, \bar v(0)]$ and with $x\in \R$ defined by $v(0)=\bar v(x)$,  the outward derivatives at the origin along the $N$  edges have to be equal to    $d=  \bar v'(x)$ or to $-d=-\bar v'(x)$. So, we can match the edges two by two, so that two by two they sustain a function  which is equivalent to $\bar v$. If $N>2$ then 
\[
F_{G, p}(v)= \(\frac{N}2\)^{1-\frac2p} I_{p,0} > \frac{1}{C_p(\R)} \ge \frac{1}{C_p(G)} \,,
\] 
by Lemma \ref{constant-ineq}.
In the above two cases, the non-existence of solutions to \eqref{min} shows that
$ C_p(\R) = C_p(G)$,
by Theorem \ref{Thm:conc-comp2}.   On the contrary, if $N=2$, for any value of $v(0)$ in the
interval $(0, \bar v(0)]$, $v$ will satisfy
\[
F_{G, p}(v)=  I_{p,0} = \frac{1}{C_p(\R)}  \,.
\] 
and therefore $v$ will be a minimizer for \eqref{min}.
In this case again by Lemma \ref{constant-ineq} \[
\frac{1}{C_p(\R)}= \frac{1}{C_p(G)}, \; \mbox{ that is, }\;\; C_p(\R) = C_p(G)
\]
and $v$ will be equivalent to $\bar v$ and a solution to \eqref{min}.

\end{proof}

\subsection{One infinity edge and two bounded ones}\label{Sec:15-infty}
Consider a graph consisting of three edges, one equivalent to a half-line, $e_0$ and two others, $e_1, e_2$, bounded, attached to $e_0$ on its finite endpoint.  In this case we can encounter very different situations. 

Without loss of generality, we can assume that $e_0= \R_+$ and denote by $\ell_1, \ell_2$  the lengths of the bounded edges. Let us now consider functions $v_i, i=0, 1,2$ satisfying the equation the Kirkhoff-ODE system in $G$ in their respective edges and such that $v_i(0)= v_0(0)$ for $i= 1, 2$. If $\ell_1$ is small enough, consider the case where $v=v_1$ is monotone along $e_1$. Then, the outward derivative at $0$ along $e_1$ has to be negative and  large enough in absolute value, so that  $v$ reaches $0$ at the outer end of the edge. Also, $|v_1'|$ will have to be large at the outer end of the edge $e_1$. If $\ell_2$ is large enough, we can find a function $v_2$ which is not monotone on $e_2$ and such that the outward derivative at the origin is positive and  not larger than $2 \max_s [\bar v'(s)|$, with $\bar v$ defined by \eqref{vbar}. Hence by taking $\ell_1$ small enough and $\ell_2$ large enough, we see that the sum of the outward derivatives at the origin of these three functions will be negative. 

Now, keeping $\ell_1$ and $\ell_2$ fixed, and varying continuously the values of $v_1'$ and $v_2'$ at the outer ends of their respective edges, or equivalently, the value of the functions $v_i$ at the origin, we can find new functions $v_0, v_1, v_2$ such that the sum of the outward derivatives at the origin will be equal to $0$. And this can be done in two different ways: either keeping $v_0$ monotone or not. Indeed, if we fix $v_i(0)=\epsilon$ small and we choose $v_0$ and $v_1$ to be monotone, we see that the sum of the outward derivatives at the origin will be positive. Indeed,  the outward derivatives of $v_0$ and $v_1$ will be negative and very small in absolute value. On the contrary $v_2$ will not be monotone, and  at the end of $e_2$ the outward derivative at the origin along $e_2$ will be close in absolute value to the value of $v_2'$ at the outer end of $e_2$. Hence, as $\epsilon$ goes to $0$, $v_2$ gets very close to the function ${\tilde v}$ solution to
$$
-{\tilde v}'' +{\tilde v} = {\tilde v}^{p-1} \;\mbox{ in } \;(0, \ell_2), \quad {\tilde v} >0 \;\mbox{ in } \;(0, \ell_2), \quad{\tilde v}(0)={\tilde v}(\ell_2)=0\,.
$$
As $\epsilon$ goes to $0$ the sum of the three outward derivatives at the origin will be equal to $ - {v_0}'(\ell_2)$, a positive number. Hence, by continuity, in the middle there must be functions $v_0, v_1, v_2$ such that at the origin they satisfy the Kirkhoff conditions and such that $v_1$ and $v_0$ are monotone in their respective edges, while $v_2$ is not.  

The same reasoning can be done when now we consider $v_1$ monotone, but $v_0$ and $v_2$ non monotone. Thus, we have found a different solution to the Kirkhoff-ODE system in $G$.

Therefore, by a careful consideration of the phase plane in Figure \ref{Fig:phase-portrait}, and by {\it playing} with the lengths of the edges $e_1$ and $e_2$, we have proved the following.
\begin{prop}
Let $p>2$. Assume that $G$ is a graph with three edges which meet at the origin, one of them being a half-line and the two others having lengths $\ell_1, \ell_2\in \R_+$. If $\ell_1$ is small enough and $\ell_2$ is large enough, there are at least two different non-negative solutions to the Kirkhoff-ODE system in $G$. 
\end{prop}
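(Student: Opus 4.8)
\medskip
\noindent\emph{Proof proposal.} The plan is to realise a non-negative solution of the Kirkhoff-ODE system as the gluing, at the common vertex value $a:=v(0)\in(0,\bar v(0)]$, of three orbit pieces of the planar system of Figure~\ref{Fig:phase-portrait}, subject to the single scalar constraint $d_0+d_1+d_2=0$, where $d_i$ denotes the outward derivative of $v$ at the origin along $e_i$. All the analysis rests on the conserved energy $H(v,v')=\tfrac12|v'|^2-\tfrac12 v^2+\tfrac1p|v|^p$, whose level $H=0$ is the positive heteroclinic orbit carrying $\bar v$, while the nested level sets describe the monotone and the ``bump'' profiles available on each edge.

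First I would enumerate, for each edge, the admissible pieces as continuous functions of $a$. On the half-line $e_0=\R_+$ the condition $v_0\in H^1_0$ forces $v_0$ to decay at infinity, hence to sit on the $H=0$ orbit; thus $v_0$ is the restriction of a translate of $\bar v$, and for every $a\in(0,\bar v(0))$ there are exactly two such pieces, a monotone one with $d_0(a)<0$ and a non-monotone one peaking inside $e_0$ with $d_0(a)>0$. On a bounded edge $e_i=(0,\ell_i)$ the only added requirement is $v_i(\ell_i)=0$; reaching $0$ forces the piece onto an orbit with $H\geq0$, and for small $a$ there is a monotone branch with $d_i(a)<0$, while, provided $\ell_i$ is large enough, there is also a non-monotone branch with $d_i(a)>0$ which, as $a\to0^+$, degenerates to the positive Dirichlet solution $\tilde v$ of $-\tilde v''+\tilde v=\tilde v^{p-1}$ on $(0,\ell_i)$ with $\tilde v(0)=\tilde v(\ell_i)=0$.

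Then I would fix a configuration (a branch on each edge) and study the continuous function $S(a):=d_0(a)+d_1(a)+d_2(a)$, a zero of which is exactly a Kirkhoff-ODE solution in that configuration. For configuration A I take $v_0$ and $v_1$ monotone and $v_2$ non-monotone: as $a\to0^+$ one has $d_0(a),d_1(a)\to0^-$ while $d_2(a)\to\tilde v'(0)>0$, so $S(a)>0$ for small $a$; on the other hand, because $\ell_1$ is small the monotone piece on $e_1$ can reach $0$ only with a steep slope, so $|d_1(a)|$ is large, and because $\ell_2$ is large $d_2(a)$ stays bounded (by $2\max_s|\bar v'(s)|$), whence $S(a)<0$ at a suitable larger $a\in(0,\bar v(0))$. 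The intermediate value theorem yields $a_A$ with $S(a_A)=0$. Configuration B keeps $v_1$ monotone but takes $v_0$ and $v_2$ non-monotone; the identical sign analysis (now $d_0,d_2>0$ and $d_1<0$ controlled by the short edge) produces $a_B$ with vanishing defect. Since $v_0$ is monotone in the first solution and non-monotone in the second, the two triples are genuinely distinct non-negative solutions of the Kirkhoff-ODE system in $G$.

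The main obstacle is the quantitative, uniform control of the derivatives $d_i(a)$ along each branch: one must verify that the two branches exist and depend continuously on $a$ over the whole interval used, that the non-monotone piece on $e_2$ really exists with bounded positive slope — this is exactly where ``$\ell_2$ large enough'' enters, guaranteeing both $\tilde v$ and the upper bound on $d_2$ — and that the monotone piece on $e_1$ forces $|d_1|$ to be large — which is where ``$\ell_1$ small enough'' enters. Each of these is an elementary computation with the energy $H$ and the resulting quadrature for the travel time along an orbit, but arranging the two sign changes together with the continuity of the branches simultaneously is the delicate point.
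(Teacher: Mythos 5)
Your proposal is correct and follows essentially the same route as the paper: the same two configurations (first $v_0,v_1$ monotone with $v_2$ non-monotone, then $v_0,v_2$ non-monotone with $v_1$ monotone), the same continuity/intermediate-value argument in the common vertex value $a$, with $\ell_1$ small forcing $|d_1|$ large on the monotone branch and $\ell_2$ large supplying the non-monotone branch with bounded positive outward derivative degenerating to the Dirichlet solution $\tilde v$ as $a\to0^+$. Your explicit use of the conserved energy and the honest flagging of the branch-continuity issue only make more precise what the paper argues by inspection of the phase plane.
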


We see a numerical example of the above result in the following figure obtained for $\ell_1=1$ and $\ell_2=5$ (see Figure \ref{Fig:15-inf}).
\begin{figure}[ht]\label{Fig:15-inf}
\begin{center}
\includegraphics[width=5.5cm,height=5.5cm]{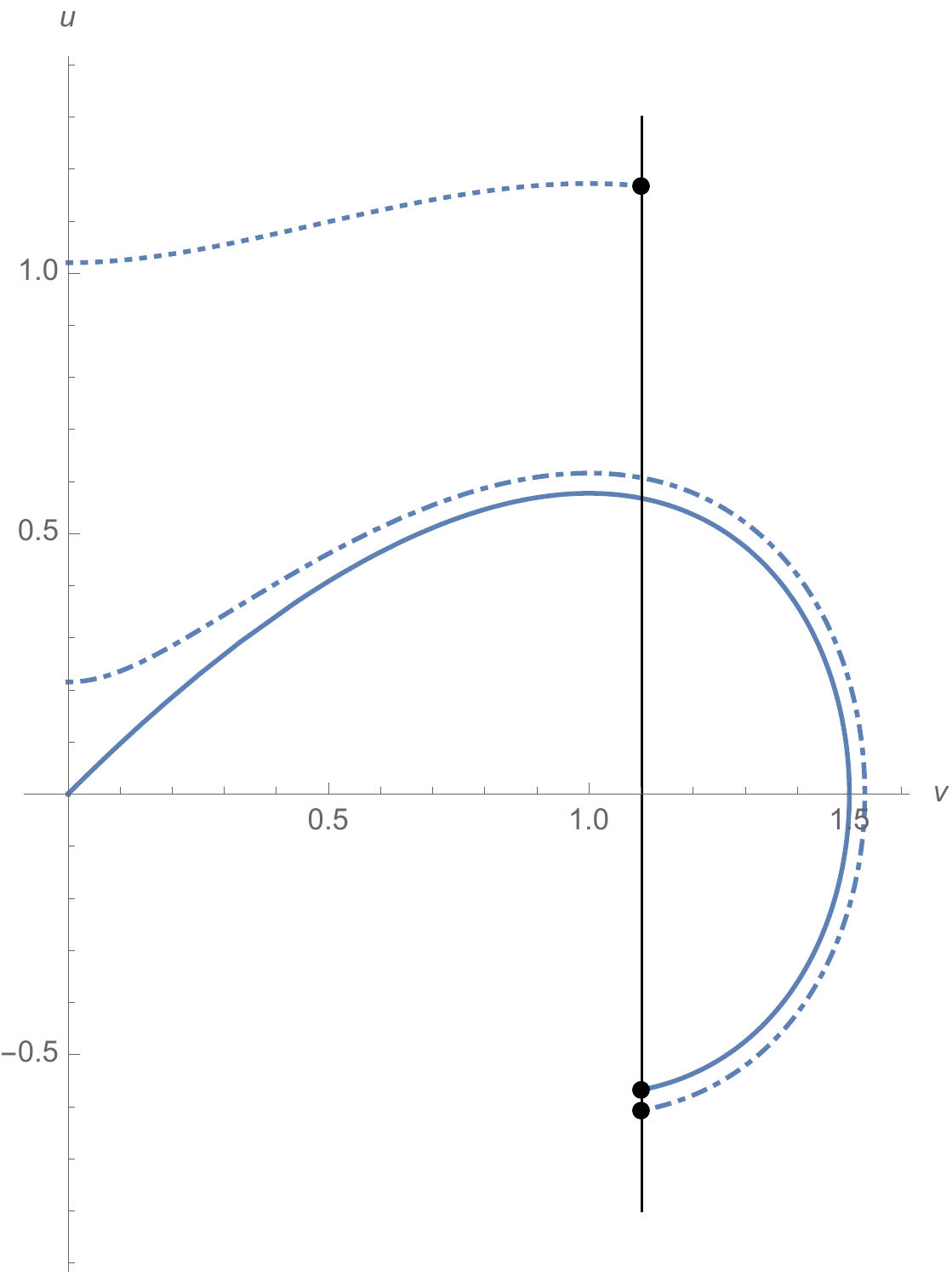} \hspace{14mm} \includegraphics[width=5cm,height=5cm]{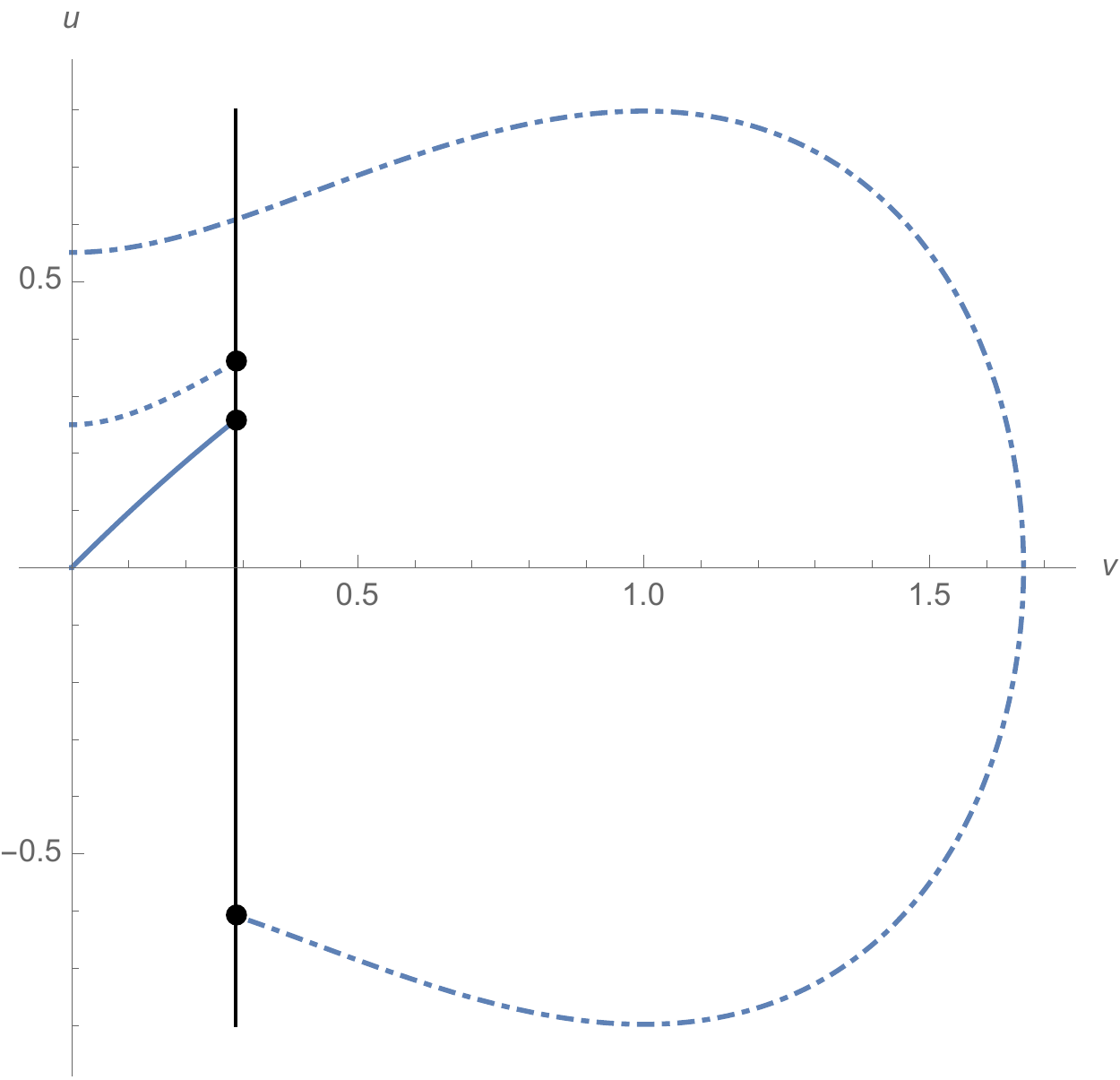} 
\caption{ Solution of the Kirkhoff-ODE system for $p=3$, and edges of lengths $1, 5$ and $+\infty$. The dotted curves correspond to the functions $v_1$, the dot-dashed to  $v_2$ and the full line to $v_0$. The dots in the graphics correspond in the phase plane to the values of $v_0, v_1$ and $v_2$ at the origin: the abscissas of the three dots coincide and the sum of the outward derivatives, corresponding to $-$ the sum of the ordinates of the dots, is equal to $0$.}
\end{center}
\end{figure}

\medskip
Let us now see a case of non-existence of solutions to the Kirkhoff-ODE system, and therefore neither to \eqref{min}.

Assume now that $\ell_1$ is small enough and fixed.  For simplicity consider $e_1$ as a copy of  the interval $(0, \ell_1)$. Let $v$ any monotone non-negative solution of 
\[
-v'' + v = |v|^{p-2}v  \;\mbox{ in }\; (0, \ell_1)\,, \quad v(0)=0\,,
\]
such that $ v(\ell_1)\le v_0(0)$. Call $M_1$ the maximum of $|v'(\ell_1)| $ for all those functions $v$. And call $M_0$ the maximum of $\bar v'(s)$ for all $ s\in \R$, with $\bar v$ defined by \eqref{vbar}.
Assume that there is a solution of the Kirkhoff-ODE system for this graph. Necessarily $v(0)\le v_0(0)$. For any value of $v(0)$, the sum of the outward derivatives along the edges $e_0$ and $e_1$ will be negative and less than $M_0+M_1$ in absolute value. Therefore the function $v_2$ must be non-monotone and its outward derivative must be positive in order to comply with the Kirkhoff conditions. Now, if $\ell_2$ is small enough, the outward derivative at the origin of any function $v_2$ such that at the origin $v_2$ is not larger than $ v_0(0)$ will be larger than $M_1+M_0$. A contradiction. 

Since for any given value of $v(0)$, $|v'_1(0)|$ and $|v'_1(0)|$ must be larger than $|v'_0(0)|$, if $v_1$ were not monotone, then we could inverse the roles of $v_1$ and $v_2$ to reach the same contradiction. We have thus proved the following.

\begin{prop}
Let $p>2$.  Assume that $G$ is a graph with three edges which meet at the origin, one of them being a half-line and the two others having lengths $\ell_1, \ell_2\in \R_+$. If both $\ell_1$ and $\ell_2$ are small enough, there is no solution of the Kirkhoff-ODE system in $G$. \end{prop}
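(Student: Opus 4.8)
The plan is to argue by contradiction: suppose a non-negative solution $v$ of the Kirkhoff-ODE system exists on $G$, write $P$ for the central vertex, set $a:=v(P)\in(0,\bar v(0)]$ for the common value there, and let $d_0,d_1,d_2$ be the outward derivatives of $v$ at $P$ along $e_0,e_1,e_2$. Everything is read off from the phase portrait of Figure \ref{Fig:phase-portrait} together with the conserved energy $E=\tfrac12(v')^2-\tfrac12 v^2+\tfrac1p|v|^p$ of \eqref{ODE} and the Kirkhoff balance $d_0+d_1+d_2=0$. First I would extract the information available at $P$ irrespective of lengths. On the half-line $e_0$ the trajectory $(v,v')$ must tend to the origin, hence it lies on the positive heteroclinic orbit $\{E=0\}$; this forces $a\le\bar v(0)$ and $|d_0|=\sqrt{a^2-\tfrac2p a^p}\le M_0:=\max_{\R}|\bar v'|$. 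On each bounded edge $e_i$ the function vanishes at its outer vertex with nonzero derivative there (by uniqueness for \eqref{ODE} a double zero would give $v\equiv0$), so $E_i>0$ and the orbit through $(a,d_i)$ lies strictly outside $\{E=0\}$. Comparing $|v'|$ at the level $v=a$ on the two orbits then yields the universal inequality $|d_i|>|d_0|$ for $i=1,2$, exactly as in the proof of Theorem \ref{Thm:one-unbdd}; note this holds for every choice of lengths.

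Next I would classify $v$ on $e_1$ and $e_2$ by monotonicity. Since $v\ge0$, equals $a$ at $P$ and vanishes at the outer vertex, on each $e_i$ the profile is either monotone (in which case $d_i<0$) or non-monotone, in which case it first increases from $a$, so $d_i>0$, and its interior maximum is a turning point of an orbit with $E_i>0$, which forces the peak value to exceed $\bar v(0)$. The quantitative heart of the matter is that such a non-monotone profile, climbing above $\bar v(0)$ and returning to $0$ inside a short edge, can only do so at high energy: writing the descent length from the peak as $\int_0^{v_{\max}}dv/\sqrt{2E_i+v^2-\tfrac2p v^p}$ and checking that this is a decreasing function of $v_{\max}$ tending to $0$, one obtains a threshold $D(\ell)\to+\infty$ as $\ell\to0^+$ with $|d_i|\ge D(\ell_i)$ whenever $v$ is non-monotone on $e_i$. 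I would also record that, for $\ell_1$ fixed, the supremum $M_1$ of $|d_1|$ over monotone profiles on $e_1$ with $v(P)\le\bar v(0)$ is finite.

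With these ingredients the proof reduces to a sign analysis of $(d_1,d_2)$ via $d_0=-(d_1+d_2)$. If $v_1$ and $v_2$ are of the same monotonicity type, then $d_1,d_2$ share a sign, so $|d_0|=|d_1|+|d_2|>|d_1|$, contradicting $|d_1|>|d_0|$; thus both the monotone–monotone and the non-monotone–non-monotone cases are impossible for all lengths. There remains the mixed case, say $v_1$ monotone and $v_2$ non-monotone (the reversed labeling being symmetric): here $|d_0+d_1|\le M_0+M_1$ while $|d_2|\ge D(\ell_2)$, so once $\ell_2$ is small enough that $D(\ell_2)>M_0+M_1$ the identity $d_2=-(d_0+d_1)$ cannot hold. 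Imposing that both $\ell_1$ and $\ell_2$ be small guarantees that whichever edge carries the non-monotone profile is short, so the mixed case is excluded too and no solution can exist; non-existence of minimizers for \eqref{min} then follows from Proposition \ref{Prop:ODE}.

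I expect the main obstacle to be the estimate of the middle step, namely the uniform blow-up $D(\ell)\to+\infty$ of the inner derivative for non-monotone profiles on short edges, and the accompanying verification that a short edge admits no more elaborate (multi-bump) non-negative profile; both are quadrature statements about \eqref{ODE} that need some care with the singular turning-point integral but introduce no idea beyond the phase portrait. Everything else is bookkeeping with the signs of the $d_i$ and the two a priori bounds $|d_0|\le M_0$ and $M_1<\infty$.
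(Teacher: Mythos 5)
Your proposal is correct and follows essentially the same route as the paper: bound $|d_0|$ by $M_0$ via the heteroclinic, bound the monotone bounded-edge derivative by a finite $M_1$, observe that a non-monotone profile on a short edge forces an inner derivative exceeding $M_0+M_1$, and close with the Kirkhoff balance; your explicit treatment of the two same-type cases via $|d_i|>|d_0|$ is exactly what the paper's terse ``inverse the roles'' remark relies on. The quantitative step you flag as the main obstacle (the blow-up $D(\ell)\to+\infty$) is also the implicit core of the paper's argument, so nothing is missing.
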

\begin{remark} That under the  assumptions of the above proposition, there is no solution of \eqref{min} was already proved in Theorem \ref{Thm:one-unbdd}. But it is also a corollary of the above proposition.
\end{remark}

\subsection{Three bounded edges}\label{Sec:3-bounded}
This is a very easy case, because it is a direct consequence of Theorem \ref{Prop:compact-ms}. Indeed, all bounded locally finite graphs are compact sets. Therefore, we have the following.

\begin{cor}
For a three bounded edges' graph with a single inner vertex, there exists a non-negative solution of the minimization problem \eqref{min}, that is, a extremal function for the inequality \eqref{ineq}. That implies in particular, that there exists at least one solution of the equation the Kirkhoff-ODE system, but there could be more than one. See below examples of the two situations.
\end{cor}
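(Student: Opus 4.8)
The plan is to read the corollary off directly from Proposition~\ref{Prop:compact-ms}, together with Proposition~\ref{Prop:ODE}, since a graph made of three bounded edges meeting at a single inner vertex is as trivial an instance of a bounded, locally finite graph as one could wish. First I would check the hypotheses of Proposition~\ref{Prop:compact-ms}: local finiteness is immediate, because only the three edges $e_1,e_2,e_3$ meet at the unique inner vertex, and boundedness follows from $\ell_1,\ell_2,\ell_3<+\infty$ (as a metric space the graph is in fact compact, being a finite gluing of compact intervals). Proposition~\ref{Prop:compact-ms} then produces at once a minimizer $v$ for the problem~\eqref{min}.

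Next I would upgrade $v$ to a non-negative extremal. By the preliminary observation recorded just after~\eqref{min}, replacing $v$ by $|v|$ does not increase the quotient $F_{G,p}$, so a non-negative minimizer exists without loss of generality. Since a function minimizes~\eqref{min} if and only if it is an extremal for~\eqref{ineq}, this non-negative $v$ is in particular an extremal function for the inequality~\eqref{ineq}, which is the first assertion of the corollary.

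Then I would pass to the Euler--Lagrange system through Proposition~\ref{Prop:ODE}: after multiplying $v$ by a suitable constant, $v$ solves~\eqref{ODE} on each of $e_1,e_2,e_3$, vanishes at the three outer (degree-one) vertices, and satisfies the Kirkhoff condition at the inner vertex. In other words $v$ is a solution of the Kirkhoff-ODE system in $G$, which is precisely the ``that implies in particular'' clause of the statement.

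The remaining claim, that ``there could be more than one'' solution, is not something I would try to establish in general: it only asserts that both uniqueness and non-uniqueness can occur, and the corollary itself defers this to explicit examples rather than a proof. Accordingly I expect no genuine obstacle here — the entire content of the corollary is the one-line application of compactness in Proposition~\ref{Prop:compact-ms}, with Proposition~\ref{Prop:ODE} supplying the passage to the ODE system.
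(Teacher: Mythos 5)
Your proposal is correct and follows exactly the paper's route: the paper also derives the corollary directly from Proposition~\ref{Prop:compact-ms} (noting that a bounded locally finite graph is compact), with the non-negativity coming from the remark after~\eqref{min} and the passage to the Kirkhoff-ODE system from Proposition~\ref{Prop:ODE}, while the multiplicity claim is left to the numerical examples.
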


In the first case for which we have computed explicit solutions, we take the lengths of the three edges to be equal to $1, 2$ and $5$. In this case we find numerically a solution to the Kirkhoff-ODE system. See Figure \ref{Fig:125} below.

\begin{figure}[ht]\label{Fig:125}
\begin{center}
\includegraphics[width=6cm,height=5.5cm]{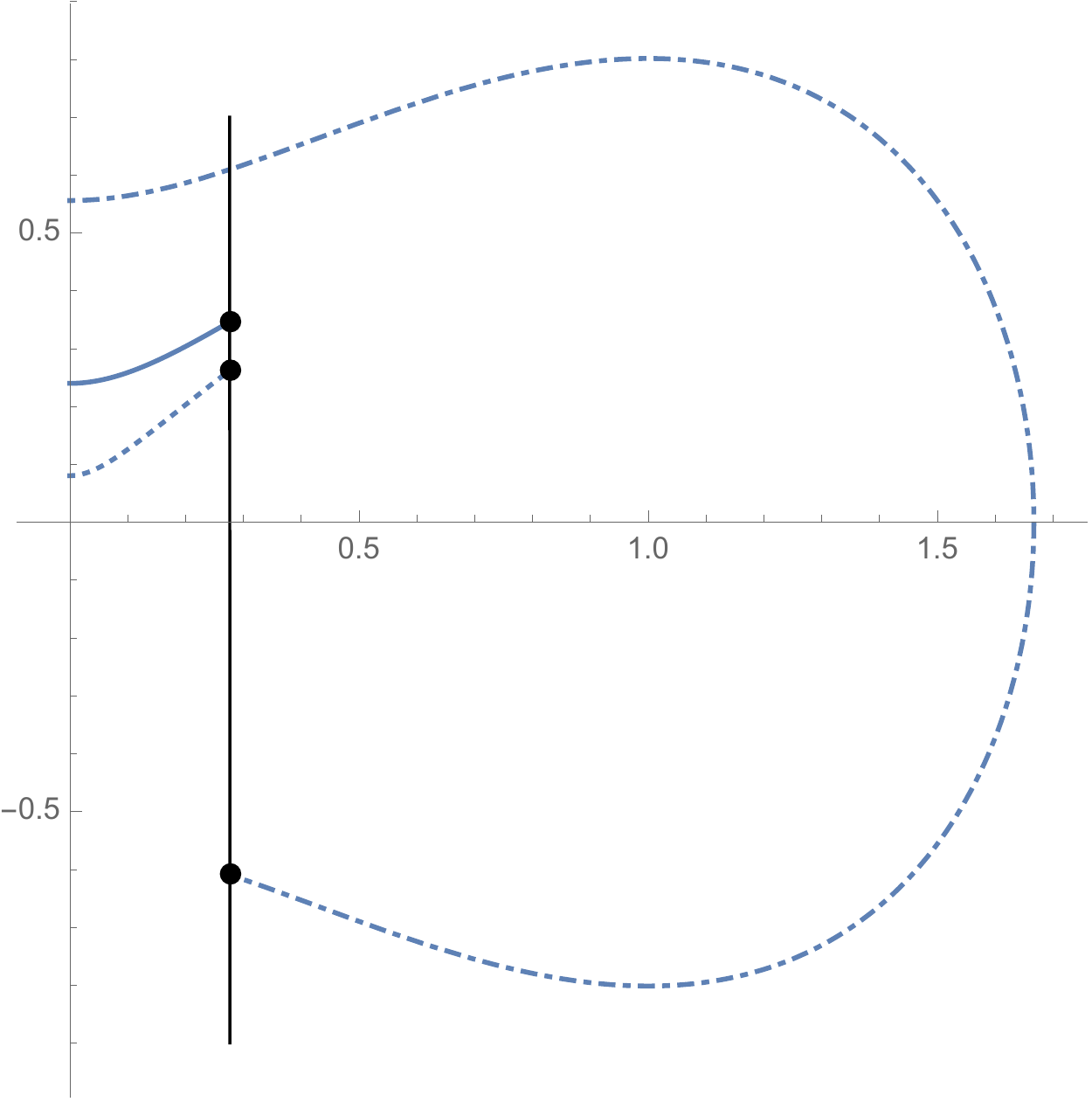}
\caption{\label{125} Solution of the Kirkhoff-ODE system for $p=3$, and edges of lengths $1, 2$ and $5$. The dotted curve corresponds to $v_1$ ($\ell_1=1$), the dot-dashed one to $v_3$ ($\ell_3=5$) and the full line to $v_2$ ($\ell_2=2$). We observe that this solution is near the solution to the $1, 5, +\infty$ problem on the right of Figure \ref{Fig:15-inf}. On the contrary, it is difficult to imagine a solution near the one on the left of Figure \ref{Fig:15-inf}  for $\ell_2=2$. Indeed, if $\ell_2$ were larger, we would be able to find one such solution, making the number of solutions larger than or equal to $2$. We see this in the next figure, where $\ell_2= 4$  }
\end{center}
\end{figure}

When we instead increase the length of the second edge, that is, we take the lengths of the three edges to be equal, for instance,  to $1, 4$ and $5$, we are able to find numerically two different solutions to the Kirkhoff-ODE system. See Figure \ref {Fig:145}.

\begin{figure}[ht]\label{Fig:145}
\begin{center}
\includegraphics[width=5.5cm,height=5.5cm]{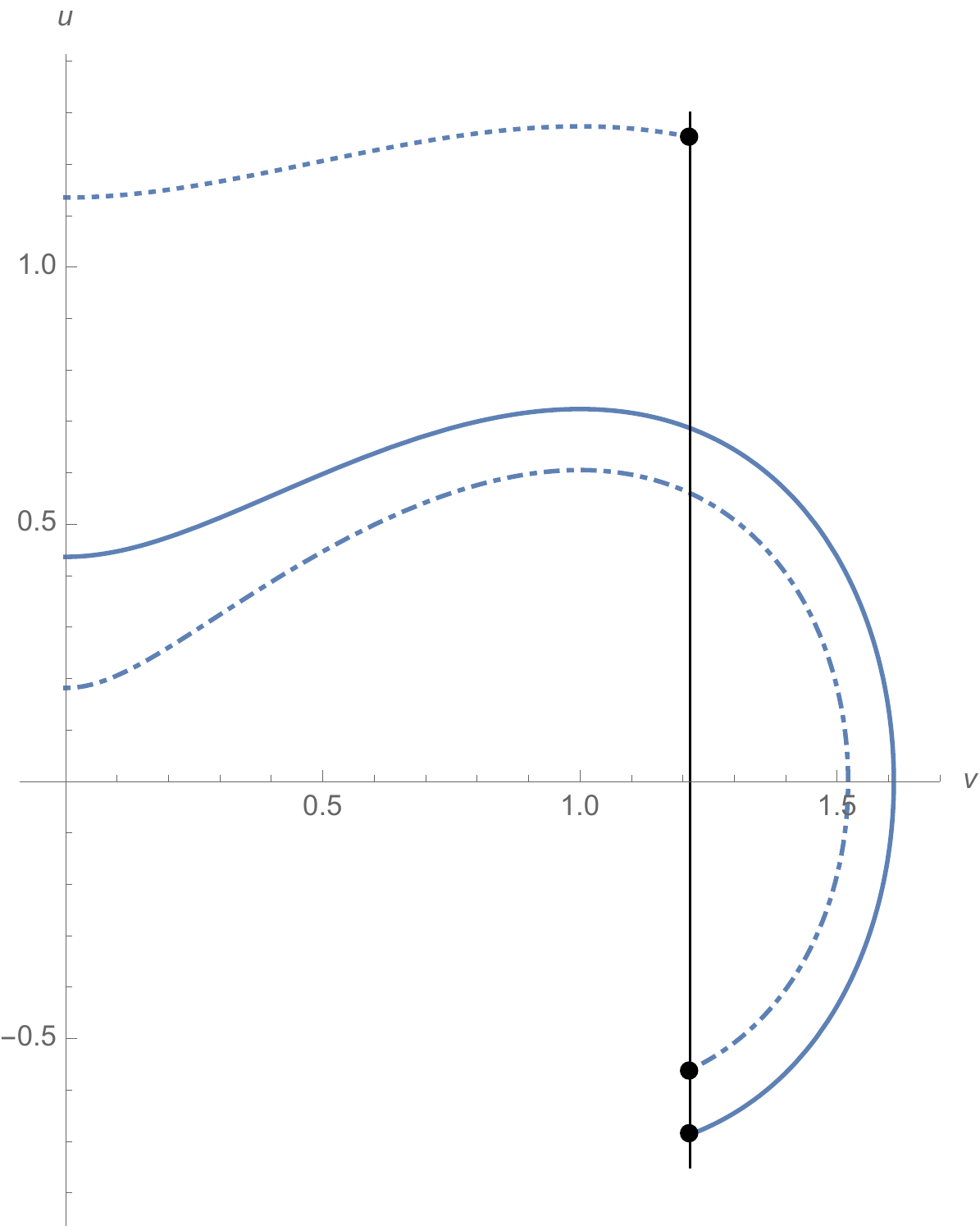} \hspace{14mm} \includegraphics[width=5cm,height=5cm]{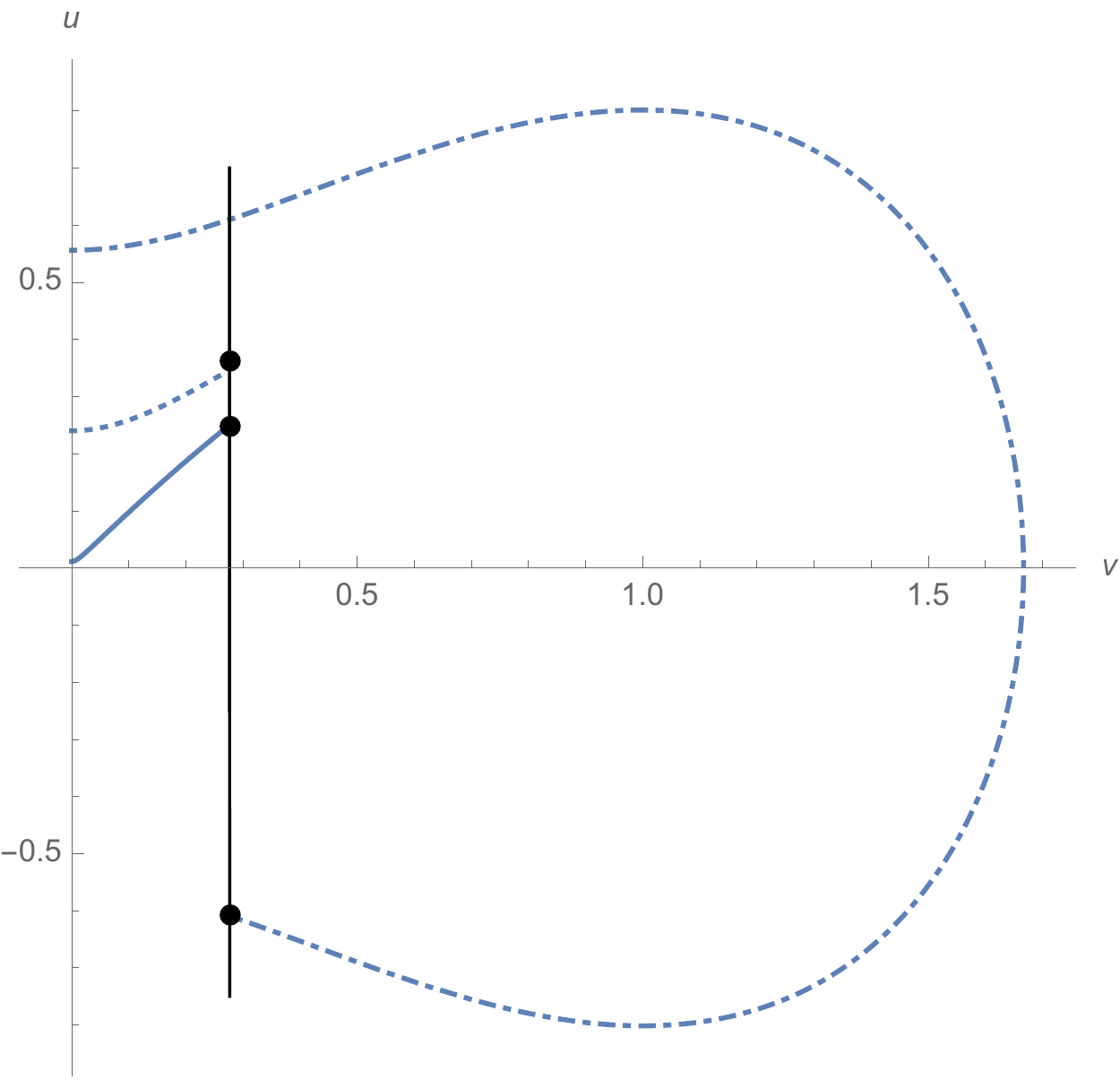} 
\caption{ Solution of the Kirkhoff-ODE system for $p=3$, and edges of lengths $1, 4$ and $5$. The dotted curves correspond to the functions $v_1$ ($\ell_1=1$), the dot-dashed to  $v_3$ ($\ell_3=5$) and the full line to $v_2$ ($\ell_2=4$). The dots in the graphics correspond to the origin : the values of $v_1(0), v_2(0)$ and $ v_3(0)$  (the abscissas of the three dots) coincide and the sum of the outward derivatives, corresponding to $-$ the sum of the ordinates of the dots, is equal to $0$.  Note that the figure on the left has the same structure as the left figure on Figure \ref{Fig:15-inf}, and the same analogy is found for the figures on the right.}
\end{center}
\end{figure}

\bibliographystyle{siam}
\bibliography{ineq-graphs}

\end{document}